\theoremstyle{plain}
\newtheorem{thrm}{Theorem}[section]
\newtheorem{lemma}[thrm]{Lemma}
\newtheorem{prop}[thrm]{Proposition}
\newtheorem{cor}[thrm]{Corollary}
\begin{document}
\newcommand{\sn}{\mathbb{S}^{n-1}}
\newcommand{\SL}{\mathcal L^{1,p}( D)}
\newcommand{\Lp}{L^p( Dega)}
\newcommand{\py}{  \partial_y^a}
\newcommand{\La}{\mathscr{L}_a}
\newcommand{\CO}{C^\infty_0( \Omega)}
\newcommand{\Rn}{\mathbb R^n}
\newcommand{\Rm}{\mathbb R^m}
\newcommand{\R}{\mathbb R}
\newcommand{\Om}{\Omega}
\newcommand{\Hn}{\mathbb H^n}
\newcommand{\aB}{\alpha B}
\newcommand{\eps}{\ve}
\newcommand{\BVX}{BV_X(\Omega)}
\newcommand{\p}{\partial}
\newcommand{\IO}{\int_\Omega}
\newcommand{\bG}{\boldsymbol{G}}
\newcommand{\bg}{\mathfrak g}
\newcommand{\bz}{\mathfrak z}
\newcommand{\bv}{\mathfrak v}
\newcommand{\Bux}{\mbox{Box}}
\newcommand{\e}{\ve}
\newcommand{\X}{\mathcal X}
\newcommand{\Y}{\mathcal Y}
\newcommand{\W}{\mathcal W}
\newcommand{\la}{\lambda}
\newcommand{\vf}{\varphi}
\newcommand{\rhh}{|\nabla_H \rho|}
\newcommand{\Ba}{\mathcal{B}_\beta}
\newcommand{\Za}{Z_\beta}
\newcommand{\ra}{\rho_\beta}
\newcommand{\n}{\nabla}
\newcommand{\vt}{\vartheta}
\newcommand{\its}{\int_{\{y=0\}}}

\numberwithin{equation}{section}

\newcommand{\RN} {\mathbb{R}^N}
\newcommand{\Sob}{S^{1,p}(\Omega)}
\newcommand{\Dxk}{\frac{\partial}{\partial x_k}}
\newcommand{\Co}{C^\infty_0(\Omega)}
\newcommand{\Je}{J_\ve}
\newcommand{\beq}{\begin{equation}}
\newcommand{\bea}[1]{\begin{array}{#1} }
\newcommand{\eeq}{ \end{equation}}
\newcommand{\ea}{ \end{array}}
\newcommand{\eh}{\ve h}
\newcommand{\Dxi}{\frac{\partial}{\partial x_{i}}}
\newcommand{\Dyi}{\frac{\partial}{\partial y_{i}}}
\newcommand{\Dt}{\frac{\partial}{\partial t}}
\newcommand{\aBa}{(\alpha+1)B}
\newcommand{\GF}{\psi^{1+\frac{1}{2\alpha}}}
\newcommand{\GS}{\psi^{\frac12}}
\newcommand{\HFF}{\frac{\psi}{\rho}}
\newcommand{\HSS}{\frac{\psi}{\rho}}
\newcommand{\HFS}{\rho\psi^{\frac12-\frac{1}{2\alpha}}}
\newcommand{\HSF}{\frac{\psi^{\frac32+\frac{1}{2\alpha}}}{\rho}}
\newcommand{\AF}{\rho}
\newcommand{\AR}{\rho{\psi}^{\frac{1}{2}+\frac{1}{2\alpha}}}
\newcommand{\PF}{\alpha\frac{\psi}{|x|}}
\newcommand{\PS}{\alpha\frac{\psi}{\rho}}
\newcommand{\ds}{\displaystyle}
\newcommand{\Zt}{{\mathcal Z}^{t}}
\newcommand{\XPSI}{2\alpha\psi \begin{pmatrix} \frac{x}{\left< x \right>^2}\\ 0 \end{pmatrix} - 2\alpha\frac{{\psi}^2}{\rho^2}\begin{pmatrix} x \\ (\alpha +1)|x|^{-\alpha}y \end{pmatrix}}
\newcommand{\Z}{ \begin{pmatrix} x \\ (\alpha + 1)|x|^{-\alpha}y \end{pmatrix} }
\newcommand{\ZZ}{ \begin{pmatrix} xx^{t} & (\alpha + 1)|x|^{-\alpha}x y^{t}\\
     (\alpha + 1)|x|^{-\alpha}x^{t} y &   (\alpha + 1)^2  |x|^{-2\alpha}yy^{t}\end{pmatrix}}
\newcommand{\norm}[1]{\lVert#1 \rVert}
\newcommand{\ve}{\varepsilon}
\newcommand{\D}{\operatorname{div}}
\newcommand{\G}{\mathscr{G}}

\title[Space-like strong unique continuation, etc.]{ Space-like strong unique continuation for  some fractional parabolic equations}

\author{Vedansh Arya}
\address{Tata Institute of Fundamental Research\\
Centre For Applicable Mathematics \\ Bangalore-560065, India}\email[Vedansh Arya]{vedansh@tifrbng.res.in}

\author{Agnid Banerjee}
\address{Tata Institute of Fundamental Research\\
Centre For Applicable Mathematics \\ Bangalore-560065, India}\email[Agnid Banerjee]{agnidban@gmail.com}

\author{Donatella Danielli}
\address{Department of Mathematics\\ Arizona State University}\email[Donatella Danielli]{DDanielli@asu.edu}

\author{Nicola Garofalo}
\address{University of Padova, Italy}\email[Nicola Garofalo]{nicola.garofalo@unipd.it}

\thanks{A. Banerjee  is supported in part by SERB Matrix grant MTR/2018/000267 and by Department of Atomic Energy,  Government of India, under
project no.  12-R \& D-TFR-5.01-0520. N. Garofalo is supported in part by a Progetto SID: ``Non-local Sobolev and isoperimetric inequalities", University of Padova, 2019.}

%
%
%
\keywords{}
\subjclass{35A02, 35B60, 35K05}

\maketitle
\begin{abstract}
In this paper we establish the \emph{space-like} strong unique continuation for nonlocal equations of the type  $(\partial_t - \Delta)^s u= Vu$,  for $0<s <1$.  The proof of our  main result, Theorem \ref{main}, is achieved via a conditional elliptic type doubling property for solutions to the appropriate extension problem, followed by a blowup analysis.   
\end{abstract}

\tableofcontents
 
\section{Introduction and the statement of the main result}

The unique continuation property is one of the most fundamental aspects of the theory of partial differential equations. Its strong formulation states that if a solution to a certain PDE vanishes to infinite order at a point of a connected open set, then it must vanish identically in that set. Since harmonic functions are real analytic, it follows that the standard Laplacian has this property. However, it is a very non trivial fact that zero order perturbations of the Laplacian with, say, bounded potentials continue to have the strong unique continuation property. For parabolic equations the situation is much more delicate. An example of Frank Jones in \cite{Jr} shows that, in fact, there exist nontrivial solutions of the heat equation in $\R^{n+1}$  which are supported in a strip $\Rn \times (t_1, t_2)$. Although such solutions fail to satisfy the growth conditions at infinity that guarantee uniqueness for the heat equation, if we restrict Jones' example to a finite cylinder we infer that space-time backward propagation of zeros of infinite order fails for local solutions of the heat equation. 

In recent years there has been a considerable progress in the study of nonlocal operators following the seminal work \cite{CS} of Caffarelli and Silvestre. For an overview of the subject we refer the reader to e.g. \cite{Gft}. Despite such developments, the unique continuation property for the basic nonlocal model operators and their perturbations presently remains an area with many remarkable challenges. In this paper we settle the following  basic question concerning a class of nonlocal parabolic equations. In  $\Rn_x\times \R_t$ we consider the heat operator $H = \p_t -\Delta_x$ and denote by $H^s$ its fractional power of order $s\in (0,1)$. Our primary objective is to study \emph{space-like} strong unique continuation for solutions to the following nonlocal parabolic equation in a finite cylinder $B_1 \times (-1, 0]\subset \Rn_x\times \R_t$
\begin{equation}\label{e0}
H^s u(x,t) = - V(x,t) u(x,t),
\end{equation}
where on the potential $V$ we assume that for some $K>0$ one has 
\begin{equation}\label{vassump}
||V||_{C^1(B_1 \times (-1, 0])} \leq K, \ \text{if}\ s \in [1/2, 1),\ \ \ \ \ \ \ ||V||_{C^2(B_1 \times (-1, 0])} \leq K,\ \text{for}\ s \in (0,1/2).
\end{equation}
More precisely, we study functions $u \in \operatorname{Dom}(H^s) = \{u\in L^2(\Rn \times \R)\mid H^s u \in L^2(\Rn \times \R)\}$ that  satisfy \eqref{e0} above, and \emph{vanish to infinite order} at $(0,0)$ in the following sense: for any $k>0$, there exists $C_k>0$ such that
\begin{equation}\label{van1}
|u(x, t) | \leq C_k (|x |^2 + |t|)^{k/2}.
\end{equation}
Our main result is the following.

\begin{thrm}[Space-like strong unique continuation]\label{main}
Let $u \in \operatorname{Dom}(H^s)$ solve \eqref{e0} in $B_1 \times (-1, 0]$. Assume that $u$ vanishes  to infinite order at $(0,0)$ in the sense of \eqref{van1} above. Then it must be $u(\cdot, 0) \equiv 0$ in $\Rn$. 
\end{thrm}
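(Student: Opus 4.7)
The plan follows the roadmap indicated by the authors: extension, conditional doubling, blowup. First, realize $H^s$ locally via the parabolic extension of Caffarelli--Silvestre--Stinga--Torrea type. With $a=1-2s$, lift $u$ to $U(x,y,t)$ on $\mathbb{R}^{n+1}_+\times\mathbb{R}$ solving
\[
y^a\partial_t U \;=\; \operatorname{div}(y^a\nabla U),\qquad U(x,0,t)=u(x,t),
\]
with conormal relation $-\lim_{y\to 0^+} y^a\partial_y U = c_s H^s u = -c_s V u$. Using the Poisson kernel (subordination) representation for $U$ together with \eqref{vassump}, transfer the parabolic infinite-order vanishing \eqref{van1} of $u$ at $(0,0)$ into an infinite-order vanishing of $U$ at $(0,0,0)$ in the sense $|U(x,y,t)|\le C_k(|x|^2+y^2+|t|)^{k/2}$.

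Next, design an Almgren--Poon type frequency adapted to the final time $t=0$. With $B_r^+\subset\mathbb{R}^{n+1}_+$ the upper half-ball of radius $r$, consider
\[
H(r)=\int_{B_r^+}U(x,y,0)^2\,y^a\,dx\,dy,\qquad D(r)=\int_{B_r^+}|\nabla U(x,y,0)|^2\,y^a\,dx\,dy,
\]
and $N(r)=rD(r)/H(r)$. Differentiating $N$ and using the extension equation produces three contributions: an internal energy identity from integration by parts against the weight $y^a$; a boundary term at $y=0$ that, via the conormal relation, equals $c_s\int_{B_r\cap\{y=0\}} V u^2$ modulo lower order and is controlled by \eqref{vassump}; and a parabolic term involving $\partial_t U\big|_{t=0}$. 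The last term is the crux: one must use the parabolic infinite-order vanishing of $U$ together with a Carleman-type estimate for the degenerate parabolic extension to absorb $\partial_t U|_{t=0}$ into $H(r)$ up to lower-order corrections. This yields quasi-monotonicity of $N(r)e^{Cr}$ and, as a corollary, the \emph{conditional} elliptic-type doubling
\[
\int_{B_{2r}^+}U(x,y,0)^2\,y^a\,dx\,dy \;\le\; C\int_{B_r^+}U(x,y,0)^2\,y^a\,dx\,dy,\qquad r<r_0.
\]

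Finally, perform a blowup. Define the normalized rescalings $U_r(x,y)=r^{(n+1+a)/2}U(rx,ry,0)/\sqrt{H(r)}$; doubling provides uniform $L^2(B_R^+,y^a)$ bounds, so along a subsequence $U_{r_j}\to U_0$ locally in $L^2$ with $\int_{B_1^+}U_0^2\,y^a=1$. The potential term scales away, hence $U_0$ satisfies $\operatorname{div}(y^a\nabla U_0)=0$ on $\mathbb{R}^{n+1}_+$ with zero conormal at $y=0$, and is homogeneous of a finite degree $\kappa=\lim_{r\to 0^+}N(r)$. But $U_0$ inherits the infinite-order vanishing at the origin: combining homogeneity with \eqref{van1} yields, for any $k>\kappa$ and any fixed $(x,y)$,
\[
|U_0(x,y)| \;=\; r^{-\kappa}|U_0(rx,ry)| \;\le\; C_k r^{k-\kappa}(|x|^2+y^2)^{k/2} \;\xrightarrow[r\to 0^+]{}\; 0.
\]
Hence $U_0\equiv 0$, contradicting the normalization. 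Therefore $u(\cdot,0)\equiv 0$ in a neighborhood of the origin, and the nonlocal structure of $H^s$ propagates the conclusion to all of $\mathbb{R}^n$.

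The main obstacle I anticipate is the conditional doubling. The parabolic term $\partial_t U|_{t=0}$ produced by differentiating $N(r)$ has no elliptic analogue; the infinite-order vanishing must be put to quantitative use via Carleman estimates, and these genuinely exploit the regularity thresholds in \eqref{vassump}, which is presumably why the hypotheses on $V$ bifurcate between $s\ge 1/2$ and $s<1/2$.
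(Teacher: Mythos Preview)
Your outline has the right three-act structure (extension, doubling, blowup), but two of the key steps are genuinely broken, and the middle one is quite different from what the paper actually does.

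\medskip
\textbf{Transfer of vanishing to $U$ in $y$.} You assert that the Poisson representation together with \eqref{van1} yields $|U(x,y,t)|\le C_k(|x|^2+y^2+|t|)^{k/2}$. This is not a priori available, and is in fact close to the main content of the theorem. The paper never claims it. Instead, the whole proof is a contradiction argument under the standing hypothesis $\int_{\mathbb B_1}U(X,0)^2 y^a\,dX>0$; that is the ``condition'' in the conditional doubling. Only \emph{after} the contradiction does one know $U(\cdot,0)\equiv 0$ in $\mathbb B_1$, and the propagation to all of $\R^{n+1}_+$ then relies on classical parabolic unique continuation in the region $\{y>0\}$ (Alessandrini--Vessella, Escauriaza--Fern\'andez), not on any a priori decay of $U$ in $y$.

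\medskip
\textbf{The frequency and the $\partial_t U|_{t=0}$ term.} Your elliptic frequency $N(r)=rD(r)/H(r)$ built from the time-slice $t=0$ leads, as you note, to an uncontrolled term $\int \partial_t U|_{t=0}\cdot(\dots)$. Saying it is ``absorbed via Carleman estimates using the parabolic infinite-order vanishing of $U$'' is circular, since you do not have that vanishing (previous paragraph). The paper avoids this obstacle by using a \emph{Poon-type, Gaussian-weighted, time-dependent} frequency $N_b(t)=(t+b)D_b(t)/H_b(t)$ after localisation $f=\phi U$, and establishes almost-monotonicity in $t$ via first-variation identities for $H_b,D_b$ (Lemma~\ref{L:Db}) together with a crucial monotonicity-in-time lemma (Lemma~\ref{mont}) that passes information to $t=0$. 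The doubling (Theorem~\ref{db1}) then comes from bounding $N_b$ and invoking Lemmas~\ref{escves3}--\ref{escves4}. None of this is an elliptic $N(r)$ computation at a fixed time.

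\medskip
\textbf{The blowup contradiction.} Your final step is circular. You write $|U_0(x,y)|=r^{-\kappa}|U_0(rx,ry)|\le C_k r^{k-\kappa}(|x|^2+y^2)^{k/2}$, but the second inequality already assumes $U_0$ vanishes to infinite order, which is exactly the claim. Moreover, after normalising by $\sqrt{H(r)}$ the rescalings have unit mass, so no infinite-order vanishing can survive in the limit. The paper's blowup is parabolic: one rescales $U_j(X,t)=U(r_jX,r_j^2t)/(\cdots)$, uses the doubling to extract a limit $U_0$ solving the homogeneous extension problem with zero weighted Neumann data, and shows only that $U_0$ \emph{vanishes on the thin set} $\{y=0\}$ (this comes from \eqref{van1} for $u$ combined with the lower bound $\int_{\mathbb Q_r}U^2 y^a\gtrsim r^L$). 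The contradiction is then obtained from a \emph{weak unique continuation} result for the extension problem (Proposition~\ref{wucp}), not from homogeneity.
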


The reader should keep in mind that, as a consequence of \cite[Theorem 5.1]{BG}, any solution to \eqref{e0} is continuous in $B_r \times (-r^2,0]$ for any $0<r\le 1$ . Therefore, $u(\cdot,0)$ makes sense classically. It is also worth re-emphasising here that, in view of Jones' cited example, the space-like propagation of zeros of infinite order claimed in Theorem \ref{main} is best possible, even for solutions to parabolic PDEs (i.e., in the local case when $s\nearrow 1$). In this connection we mention that for local solutions to second order parabolic equations space-like strong unique continuation results were proven in the remarkable works \cite{EF, EFV}. Our Theorem \ref{main} should be seen as the nonlocal counterpart of the ones in those papers. We also note that, although the equation \eqref{e0}  holds only in the finite cylinder $B_1 \times (-1, 0]$, the nonlocal nature of the operator $H^s$ forces $u(\cdot, 0)$ to vanish even outside $B_1$, where the equation  is not necessarily valid. This aspect is in the same  spirit of what happens in the elliptic case, see \cite{FF, Ru}. 

To provide some further perspective, we mention that for global solutions of the nonlocal equation \eqref{e0} a backward space-time strong unique continuation theorem was previously established by two of us in \cite{BG}. Such result represented the nonlocal counterpart of the one first obtained  by Poon in \cite{Po} for the local case $s=1$. In contrast with \cite{BG}, in this paper we do not assume that the equation \eqref{e0} hold globally.  This is the key new element of  the present work and it accounts for the different novel challenges, which we next describe.

The approach in \cite{BG} combined a monotonicity formula for an adjusted frequency function for the extension problem for $H^s$ (for the latter see \eqref{la} below)
with a  delicate blowup analysis  with respect to the so-called Almgren type rescalings. 
For Theorem \ref{main} above the situation is complicated by the fact that, since $u$ solves \eqref{e0} only locally, in view of the above comments the propagation of the zeros of $u$ is only expected to happen at $t=0$, but not necessarily backward in time.  Therefore,  because of the space-like thin propagation of zeros in the present scenario,   a blowup procedure  which is akin to that of \cite{FF, Ru, BG} is expected to encounter a  compactness obstruction. This aspect makes the analysis of the space-like strong unique continuation in the nonlocal parabolic framework somewhat subtle and different from both the elliptic case in \cite{FF, Ru}, and that of the strong backward uniqueness studied in \cite{BG}.

In order to overcome  such  hindrances we have taken inspiration from the beautiful ideas in  \cite{EFV}. However, as it is evident by looking at the work in Section \ref{s:main}, the adaption of their approach to our nonlocal setting has involved a novel delicate analysis. Roughly speaking, our approach is to argue by contradiction by first assuming that the solution of a certain parabolic extension problem \eqref{exprob} does not vanish in the normal direction $y$ at $t=0$. In this scenario, by adapting the frequency function approach introduced in \cite{GL}, \cite{GL2}, and subsequently extended in \cite{Po} to the heat equation, we establish an elliptic-type doubling property for such solutions similar to that in \cite{EFV}.  Such a conditional doubling property is the critical tool that allows us to effectively perform the blowup analysis. We eventually reduce the problem to a weak unique continuation property where the blowup limit solves the extension problem \eqref{exprob}, and also  vanishes identically both  in the tangential direction $x\in \Rn$ and in time.  The idea of establishing a conditional space-time doubling property for the extension problem is the key novelty of the present work. 

The paper is organised as follows. In Section \ref{s:n}, we introduce some basic notations and gather some preliminary results that are relevant to our work. In Section \ref{s:main} we establish the key results, Lemmas \ref{mont}, \ref{L:Db} and Theorem \ref{db1}, that constitute the novel part of our work, and its backbone. Once these auxiliary results are available, the proof of the main Theorem \ref{main} follows by the contradiction argument outlined above. 

As a closing comment we mention that the $C^2$ assumption on the potential $V$ in \eqref{vassump} enters indirectly in Theorem \ref{main}, in the sense that it is only needed for the auxiliary regularity Lemma \ref{reg1} below. The $C^2$ hypothesis is otherwise not directly used in the work in this paper.

\section{Notations and Preliminaries}\label{s:n}

In this section we introduce the relevant notation and gather some auxiliary  results that will be useful in the rest of the paper. Generic points in $\Rn \times \R$ will be denoted by $(x_0, t_0), (x,t)$, etc. For an open set $\Omega\subset \Rn_x\times \R_t$ we indicate with $C_0^{\infty}(\Omega)$ the set of compactly supported smooth functions in $\Om$. We also indicate by $H^{\alpha}(\Omega)$ the non-isotropic parabolic H\"older space with exponent $\alpha$ defined in \cite[p. 46]{Li}. The symbol $\mathscr S(\R^{n+1})$ will denote the Schwartz space of rapidly decreasing functions in $\R^{n+1}$. For $f\in \mathscr S(\R^{n+1})$ we denote its Fourier transform by 
\[
\mathscr F_{(x,t)\to (\xi,\sigma)}(f) = \hat f(\xi,\sigma) = \int_{\Rn\times \R} e^{-2\pi i(\langle \xi,x\rangle + \sigma t)} f(x,t) dx dt = \mathscr F_{x\to\xi}(\mathscr F_{t\to\sigma} f).
\]
The heat operator in $\R^{n+1} = \Rn_x \times \R_t$ will be denoted by $H = \p_t - \Delta_x$. Given a number $s\in (0,1)$ the notation $H^s$ will indicate the fractional power of $H$ that in  \cite[formula (2.1)]{Sam} was defined on a function $f\in \mathscr S(\R^{n+1})$ by the formula
\begin{equation}\label{sHft}
\widehat{H^s f}(\xi,\sigma) = (4\pi^2 |\xi|^2 + 2\pi i \sigma)^s\  \hat f(\xi,\sigma),
\end{equation}
with the understanding that we have chosen the principal branch of the complex function $z\to z^s$. If we denote parabolic dilations with $\delta_\la(x,t) = (\la x,\la^2 t)$, then one easily recognises from \eqref{sHft} that
\begin{align*}
& H^s(\delta_\la f)(\xi,\sigma) = \la^{2s} H^s f(\la \xi,\la^2 \sigma),
\end{align*}
which shows that $H^s$ is a pseudodifferential operator of parabolic homogeneity $2s$. This motivates the introduction of the parabolic Sobolev space of fractional order $2s$ that represents the natural framework for the present work 
\begin{align}\label{dom}
\mathscr H^{2s} & =  \operatorname{Dom}(H^s)   = \{f\in \mathscr S'(\R^{n+1})\mid f, H^s f \in L^2(\R^{n+1})\}
\\
&  = \{f\in L^2(\R^{n+1})\mid (\xi,\sigma) \to (4\pi^2 |\xi|^2 + 2\pi i \sigma)^s  \hat f(\xi,\sigma)\in L^2(\R^{n+1})\},
\notag
\end{align} 
where the second equality is justified by \eqref{sHft} and Plancherel theorem. 
For the purpose of \eqref{la} below it is important to keep in mind that definition \eqref{sHft} is equivalent to the one based on Balakrishnan formula (see \cite[(9.63) on p. 285]{Samko})
\begin{equation}\label{balah}
H^s f(x,t) = - \frac{s}{\Gamma(1-s)} \int_0^\infty \frac{1}{\tau^{1+s}} \big(P^H_\tau f(x,t) - f(x,t)\big) d\tau,
\end{equation}
where we have denoted by
\begin{equation}\label{evolutivesemi}
P^H_\tau f(x,t) = \int_{\Rn} G(x-y,\tau) f(y,t-\tau) dy = G(\cdot,\tau) \star f(\cdot,t-\tau)(x)
\end{equation}
the \emph{evolutive semigroup}, see \cite[(9.58) on p. 284]{Samko}, i.e., the solution $u((x,t),\tau) = 
P^H_\tau f(x,t)$ of the Cauchy problem in $\R^{n+1}_{(x,t)}\times \R^+_\tau$
\[
\p_\tau u = \Delta_x u - \p_t u,\ \ \ \ \ \ u((x,t),0) = f(x,t).
\]
It is easy to recognise that \eqref{balah} is equivalent to \eqref{sHft}. For that, one observes that \eqref{evolutivesemi} gives
\begin{align*}
& \widehat{P^H_\tau f}(\xi,\sigma) = \mathscr F_{x\to\xi}(G(\cdot,\tau) \star \mathscr F_{t\to\sigma} f(\cdot,\cdot-\tau))
\\
& = \mathscr F_{x\to\xi}(G(\cdot,\tau) \star e^{-2\pi i\sigma \tau} \mathscr F_{t\to\sigma} f(\cdot,\cdot)) = e^{-4\pi^2 \tau |\xi|^2} e^{-2\pi i\sigma \tau} \hat f(\xi,\sigma)
\\
& = e^{-\tau(4\pi^2 |\xi|^2 + 2\pi i\sigma)} \hat f(\xi,\sigma).
\end{align*}
Using this identity and taking Fourier transform in \eqref{balah} we thus find for any $(\xi,\sigma)\in \R^{n+1}$ with $\xi\not=0$, one has
\[
\widehat{H^s f}(\xi,\sigma) = - \frac{s}{\Gamma(1-s)} \int_0^\infty \frac{1}{\tau^{1+s}} \big(e^{-\tau(4\pi^2 |\xi|^2 + 2\pi i\sigma)} -1\big)  d\tau\ \hat f(\xi,\sigma).
\]
Applying to the latter formula the well-known identity 
\[
- \frac{s}{\Gamma(1-s)} \int_0^\infty \frac{e^{-t L} - 1}{t^{1+s}} dt = L^s,
\]
valid for every $0<s<1$  and for $L\in \mathbb C$ such that $\Re L>0$, we see that \eqref{balah} implies \eqref{sHft}. The proof that \eqref{sHft} implies \eqref{balah} follows by back tracing the above steps.

Henceforth, given a point $(x,t)\in \R^{n+1}$ we will consider the thick half-space $\R^{n+1} \times \R^+_y$. At times it will be convenient to combine the additional variable $y>0$ with $x\in \Rn$ and denote the generic point in the thick space $\Rn_x\times\R^+_y$ with the letter $X=(x,y)$. For $x_0\in \Rn$ and $r>0$ we let $B_r(x_0) = \{x\in \Rn\mid |x-x_0|<r\}$,
$\mathbb B_r(x_0,0)=\{X = (x,y) \in \R^n \times \R^{+}\mid |x-x_0|^2 + y^2 < r^2\}$ (note that this is the upper half-ball),
and $\mathbb Q_r((x_0,t_0),0)=\mathbb B_r(x_0,0) \times (t_0,t_0+r^2]$.
When the center $x_0$ of $B_r(x_0)$ is not explicitly indicated, then we are taking $x_0 = 0$. Similar agreement for the thick half-balls $\mathbb B_r(x_0,0)$. For notational ease $\nabla U$ and  $\operatorname{div} U$ will respectively refer to the quantities  $\nabla_X U$ and $ \operatorname{div}_X U$.  The partial derivative in $t$ will be denoted by $\p_t U$ and also at times  by $U_t$. The partial derivative $\partial_{x_i} U$  will be denoted by $U_i$. At times,  the partial derivative $\partial_{y} U$  will be denoted by $U_{n+1}$. 

We next introduce a boundary value problem that represents an essential tool when working with $H^s$. Given a number $a\in (-1,1)$ and a $u:\R^n_x\times \R_t\to \R$ we seek a function $U:\R^n_x\times\R_t\times \R_y^+\to \R$ that satisfies the boundary-value problem
\begin{equation}\label{la}
\begin{cases}
\La U \overset{def}{=} \partial_t (y^a U) - \operatorname{div} (y^a \nabla U) = 0,
\\
U((x,t),0) = u(x,t),\ \ \ \ \ \ \ \ \ \ \ (x,t)\in \R^{n+1}.
\end{cases}
\end{equation}
The most basic property of the Dirichlet problem \eqref{la} is that if $s = \frac{1-a}2\in (0,1)$, then for any $1\le p\le \infty$ one has in $L^p(\R^{n+1})$
\begin{equation}\label{np}
2^{-a}\frac{\Gamma(\frac{1-a}{2})}{\Gamma(\frac{1+a}{2})} \py U((x,t),0)=  - H^s u(x,t),
\end{equation}
where $\py$ denotes the weighted normal derivative
\begin{equation}\label{nder}
\py U((x,t),0)\overset{def}{=}   \operatorname{lim}_{y \to 0^+}  y^a \partial_y U((x,t),y).
\end{equation}
We note explicitly that the sign convention in \eqref{nder} corresponds to the opposite of that of the outer normal derivative on $\p \R_+^{n+1} = \Rn_x\times\{0\}_y$.
When $a = 0$ ($s = 1/2$) the problem \eqref{la} was first introduced in \cite{Jr1} by Frank Jones, who in such case also constructed the relevant Poisson kernel and proved \eqref{np}. More recently Nystr\"om and Sande in \cite{NS} and Stinga and Torrea in \cite{ST} have independently extended the results in \cite{Jr} to all $a\in (-1,1)$. 

With this being said, we now suppose that $u$ be a solution to \eqref{e0} with $V$ satisfying \eqref{vassump}, and consider the weak solution $U$ of the following version of \eqref{la} (for the precise notion of weak solution of \eqref{wk} we refer to \cite[Section 4]{BG}) 
\begin{equation}\label{wk}
\begin{cases}
\La U=0 \ \ \ \ \ \ \ \ \ \ \ \ \ \ \ \ \ \ \ \ \ \ \ \ \ \ \ \ \ \ \text{in}\ \R^{n+1}\times \R^+_y,
\\
U((x,t),0)= u(x,t)\ \ \ \ \ \ \ \ \ \ \ \ \ \ \ \ \text{for}\ (x,t)\in \R^{n+1},
\\
\py U((x,t),0)=  2^{a} \frac{\Gamma(\frac{1+a}{2})}{\Gamma(\frac{1-a}{2})} V(x,t) u(x,t)\ \ \ \ \text{for}\ (x,t)\in B_1 \times (-1,0].
\end{cases}
\end{equation}
 Note that the third equation in \eqref{wk} is justified by \eqref{e0} and \eqref{np}. In \cite[Corollary 4.6]{BG} two of us have shown that if $u\in \mathscr H^{2s}$ (see \eqref{dom} above), then the extended function $U$ constructed as in \eqref{la} is a weak solution of \eqref{wk}. Further, in \cite[Lemma 5.3]{BG} the following regularity result for such weak solutions was proved. Such result will play a pervasive role in our analysis.

\begin{lemma}\label{reg1}
Let $U$  be a weak solution of \eqref{wk}. Then there exists $\alpha>0$ such that one has up to the thin set $\{y=0\}$ 
\[
U_i,\ U_t,\ y^a U_y\ \in\ H^{\alpha}(\mathbb B_\frac{1}{2} \times (-\frac 14, 0]),\ \ \ \  i=1,2,..,n.
\]
Moreover, the relevant H\"older norms are bounded by $\int_{\mathbb B_1 \times (-1, 0)} U^2 y^a dX dt$. 
\end{lemma}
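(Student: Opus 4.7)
The plan is to bootstrap in three stages: first a base H\"older estimate for $U$ itself, then regularity for the tangential derivatives $U_{x_i}$ and $U_t$, and finally regularity for the weighted normal derivative $y^a U_y$ via the standard duality between $\La$ and $\mathscr{L}_{-a}$. Since $|y|^a$ with $a\in(-1,1)$ is a Muckenhoupt $A_2$ weight, the natural framework throughout is the parabolic analogue of the Fabes--Kenig--Serapioni theory for degenerate operators in $A_2$-weighted energy spaces.

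For the base case, I would apply weighted Moser iteration to the weak formulation of \eqref{wk} on $\mathbb B_1\times(-1,0]$. A first iteration yields a local $L^\infty$ bound for $U$ on, say, $\mathbb B_{7/8}\times(-49/64,0]$ controlled by $\int_{\mathbb B_1\times(-1,0)} U^2 y^a\, dX\, dt$; the nontrivial Neumann data $\py U = c\, Vu$ appears as a boundary forcing term in the energy identity and, using $V\in L^\infty$, is handled as a lower-order perturbation via the weighted $A_2$-trace inequality. A second iteration upgrades this to H\"older continuity of $U$ on a slightly smaller thick cylinder, so that in particular $u = U(\cdot,0)$ is H\"older on $B_{3/4}\times(-9/16,0]$ with the desired quantitative control.

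For tangential regularity, since the coefficients of $\La$ depend only on $y$, the difference quotients $\Delta_{x_i}^h U$ and $\Delta_t^h U$ weakly solve $\La V = 0$ in the interior, with Neumann data $c\,\Delta_{x_i}^h(Vu)$, resp.\ $c\,\Delta_t^h(Vu)$, on the thin set. Exploiting $V\in C^1$ together with the base H\"older continuity of $u$ from the previous step, these data are uniformly bounded in $h$; passing to the limit via the weighted energy inequality identifies $U_{x_i}$ and $U_t$ as weak solutions of $\La V = 0$ with bounded Neumann data, and a final application of the H\"older theorem places them in $H^\alpha(\mathbb B_{1/2}\times(-1/4, 0])$.

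For the weighted normal derivative, set $W = y^a U_y$. A direct computation starting from $\La U = \partial_t(y^a U) - \operatorname{div}(y^a \nabla U) = 0$ shows that $W$ weakly satisfies the conjugate degenerate equation $\partial_t(y^{-a} W) - \operatorname{div}(y^{-a}\nabla W) = 0$ in the interior, which is again governed by an $A_2$ weight since $-a\in(-1,1)$. The Dirichlet trace of $W$ on $\{y=0\}$ is $c\, V u$, which is H\"older by the earlier stages; weighted boundary H\"older regularity for the conjugate operator with H\"older Dirichlet data then yields $y^a U_y\in H^\alpha(\mathbb B_{1/2}\times(-1/4,0])$ with the stated quantitative estimate. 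The main obstacle is the rigorous identification of $W$ as a weak solution of the conjugate equation up to the degenerate set $\{y=0\}$, together with the control of tangential difference quotients in the weighted energy space; both points are handled by Steklov averaging in $(x,t)$ and the $A_2$-weighted trace and embedding theory, as carried out in \cite[Section 5]{BG}.
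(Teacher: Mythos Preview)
The paper does not prove Lemma~\ref{reg1}; it is imported wholesale from \cite[Lemma~5.3]{BG}, as stated in the sentence immediately preceding the lemma. So there is no in-paper argument to compare against, and your three-stage bootstrap (base H\"older via weighted De Giorgi--Nash--Moser for the $A_2$ weight $y^a$, tangential difference quotients, then the conjugate equation $\mathscr L_{-a}$ for $W=y^aU_y$) is exactly the strategy carried out in \cite[Section~5]{BG}, which you yourself invoke in your closing sentence.

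One point to tighten: your assertion that $\Delta^h_{x_i}(Vu)$ and $\Delta^h_t(Vu)$ are ``uniformly bounded in $h$'' from $V\in C^1$ and mere H\"older continuity of $u$ is not quite right, since $\Delta^h_{x_i}(Vu)$ contains the term $V\,\Delta^h_{x_i}u$, whose uniform boundedness would require Lipschitz control of $u$ that you do not yet have. The fix is standard --- use the weighted Caccioppoli inequality for difference quotients to first place $\nabla_xU,\,U_t$ in the energy space (where the boundary forcing is controlled in the trace norm rather than $L^\infty$), and only then run Moser on the differentiated problem. This is also where the hypothesis \eqref{vassump} bites: the paper explicitly remarks at the end of the introduction that the $C^2$ assumption on $V$ for $s\in(0,1/2)$ (equivalently $a\in(0,1)$) is needed precisely for Lemma~\ref{reg1}, the extra derivative being consumed in closing this bootstrap in the more degenerate regime. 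Your sketch invokes only $V\in C^1$ throughout and so does not account for this.
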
  
We next recall that it was shown in \cite{Gcm} that given $\phi \in C_0^{\infty}(\R^{n+1}_+)$ the solution of the Cauchy problem with Neumann condition
\begin{align}\label{CN}
\begin{cases}
\La U=0 \hspace{2mm} &\text{in}  \hspace{2mm}\R^{n+1}_+ \times (0,\infty)\\
U(X,0)=\phi(X), \hspace{2mm} &X \in \R^{n+1}_+,\\
\py U(x, 0, t) = 0 \hspace{2mm} & x\in \Rn,\ t \in (0, \infty)
\end{cases}
\end{align}
is given by the formula 
\begin{equation}\label{Ga}
\mathscr P^{(a)}_t \phi(X_1) \overset{def}{=} U(X_1,t) = \int_{\R^{n+1}_+} \phi(X) \G (X_1,X,t) y^a dX,
\end{equation}
where 
\begin{equation}\label{funda}
\G(X_1,X,t) = p(x_1,x,t)\ p^{(a)}(y_1,y,t)
\end{equation}
is the product of the standard Gauss-Weierstrass kernel $p(x_1,x,t) = (4\pi t)^{-\frac n2} e^{-\frac{|x_1-x|^2}{4t}}$ in $\Rn\times\R^+$ with the heat kernel of the Bessel operator $\mathscr B_a = \p_{yy} + \frac ay \p_y$  with Neumann boundary condition in $y=0$ on $(\R^+,y^a dy)$ (reflected Brownian motion)
 \begin{align}\label{fs}
p^{(a)}(y_1,y,t) & =(2t)^{-\frac{a+1}{2}}\left(\frac{y_1 y}{2t}\right)^{\frac{1-a}{2}}I_{\frac{a-1}{2}}\left(\frac{y_1 y}{2t}\right)e^{-\frac{y_1^2+y^2}{4t}}.
\end{align}
In \eqref{fs} we have denoted by $I_{\frac{a-1}{2}}$ the modified Bessel function of the first kind and order $\frac{a-1}{2}$ defined by the series 
\begin{align}\label{besseries}
 I_{\frac{a-1}{2}}(z) = \sum_{k=0}^{\infty}\frac{(z/2)^{\frac{a-1}{2}+2k} }{\Gamma (k+1) \Gamma(k+1+(a-1)/2)}, \hspace{4mm} |z| < \infty,\; |\operatorname{arg} z| < \pi.
\end{align}
For future use we note explicitly that \eqref{funda} and \eqref{fs} imply that for every $x, x_1\in \Rn$ and $t>0$ one has
\begin{equation}\label{neu}
\operatorname{lim}_{y \to 0^+}  y^a \partial_y \G((x,y),(x_1,0),t) =0. 
\end{equation}
It is important for the reader to keep in mind that \eqref{Ga} defines a stochastically complete semigroup (see \cite[Propositions 2.3 and 2.4]{Gcm}), and therefore in particular we have for every $X_1\in \R^{n+1}_+$ and $t>0$ 
\begin{equation}\label{Ga1}
\mathscr P^{(a)}_t 1(X_1) = \int_{\R^{n+1}_+} \G (X_1,X,t) y^a dX = 1,
\end{equation}
and also $\mathscr P^{(a)}_t \phi(X_1) \underset{t\to 0^+}{\longrightarrow} \phi(X_1)$.

From the asymptotic behaviour of $I_{\frac{a-1}{2}}(z)$ near $z=0$ and at infinity one immediately obtains the following estimate for some $C(a), c(a) >0$ (see e.g. \cite[formulas (5.7.1) and (5.11.8)]{Le}) ,
\begin{equation}\label{bessel}
I_{\frac{a-1}{2}}(z) \leq C(a) z^{\frac{a-1}{2}}  \hspace{6mm} \text{if} \hspace{2mm} 0 < z \le c(a),\ \ \ \ \ I_{\frac{a-1}{2}}(z) \leq C(a) z^{-1/2} e^z \hspace{2mm}\ \  \text{if} \hspace{2mm} z \ge c(a).
\end{equation}
We also note explicitly that \eqref{besseries} gives as $z\to 0^+$
\begin{equation}\label{zero}
z^{\frac{1-a}{2}} I_{\frac{a-1}{2}}(z) \cong \frac{2^{\frac{1-a}{2}}}{\Gamma((1+a)/2)}.
\end{equation}

 We now state three auxiliary results that will be needed in our work. 

\begin{lemma}[Hardy type inequality]\label{escves3}
For all $h \in C_0^{\infty}(\overline{\R^{n+1}_+})$ and $b>0$ the following inequality holds
\begin{align*}
& \int_{\R^{n+1}_+} h^2 \frac{|X|^2}{8b} e^{-|X|^2/4b}y^adX \leq  2b \int_{\R^{n+1}_+} |\n h|^2 e^{-|X|^2/4b}y^a dX 
\\
& + \frac{n+1+a}{2} \int_{\R^{n+1}_+} h^2  e^{-|X|^2/4b} y^a dX.
\end{align*}
\end{lemma}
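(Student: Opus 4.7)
The strategy is an integration-by-parts identity combined with Young's inequality, in the spirit of the classical Gaussian/Hardy estimates. The driving idea is that the weight $\mu := y^a e^{-|X|^2/4b}$ satisfies a nice divergence identity with respect to the radial vector field $X$.

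First I would compute the divergence of $X\, y^a e^{-|X|^2/4b}$ on $\R^{n+1}_+$. Writing $X=(x,y)\in \R^n\times \R^+$, so that $\operatorname{div} X = n+1$, and observing that $X\cdot \nabla(y^a) = a y^a$ and $X\cdot \nabla(e^{-|X|^2/4b}) = -\frac{|X|^2}{2b}e^{-|X|^2/4b}$, one obtains
\begin{equation*}
\operatorname{div}\bigl(X\, y^a e^{-|X|^2/4b}\bigr) \;=\; (n+1+a)\, y^a e^{-|X|^2/4b} \;-\; \frac{|X|^2}{2b}\, y^a e^{-|X|^2/4b}.
\end{equation*}

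Next I would multiply by $h^2$ and integrate over $\R^{n+1}_+$. Integrating by parts, the boundary contribution on $\{y=0\}$ involves the $(n+1)$-th component $y^{a+1} e^{-|X|^2/4b}$ of the field, which vanishes as $y\to 0^+$ since $a>-1$; the decay at infinity (and compact support of $h$) kills any contribution from $|X|\to\infty$. This gives the identity
\begin{equation*}
\frac{1}{2b}\int_{\R^{n+1}_+} h^2 |X|^2\, y^a e^{-|X|^2/4b} dX \;=\; (n+1+a)\int_{\R^{n+1}_+} h^2\, y^a e^{-|X|^2/4b} dX \;+\; 2\int_{\R^{n+1}_+} h\,(X\cdot \nabla h)\, y^a e^{-|X|^2/4b} dX.
\end{equation*}
Dividing by $4$ places the left-hand side in the desired form $\int h^2 \frac{|X|^2}{8b} e^{-|X|^2/4b} y^a dX$.

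The last step is to absorb the cross term. Applying Young's inequality $2|h||X||\nabla h|\le \varepsilon |X|^2 h^2 + \varepsilon^{-1}|\nabla h|^2$ with the precise choice $\varepsilon = \frac{1}{4b}$ yields
\begin{equation*}
\tfrac{1}{2}\Bigl|\!\int h\,(X\cdot \nabla h)\, y^a e^{-|X|^2/4b} dX\Bigr| \;\le\; \tfrac{1}{2}\int h^2\tfrac{|X|^2}{8b}\, y^a e^{-|X|^2/4b} dX \;+\; b\int |\nabla h|^2\, y^a e^{-|X|^2/4b} dX.
\end{equation*}
Substituting this back into the identity above absorbs exactly half of the target term on the right into the left, and multiplying by $2$ produces the claimed inequality with the constants $2b$ and $\frac{n+1+a}{2}$.

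The only genuinely delicate point is the calibration of $\varepsilon$ in Young's inequality: any other choice forces worse constants and fails to match the stated inequality. The boundary term at $\{y=0\}$ and the tail at infinity are routine thanks to $a>-1$, the factor $y^{a+1}$, and the Gaussian decay combined with $h\in C^\infty_0(\overline{\R^{n+1}_+})$.
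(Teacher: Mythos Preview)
Your proof is correct and is essentially the same as the paper's: both derive the identity $\int h^2\frac{|X|^2}{8b}e^{-|X|^2/4b}y^a\,dX=\frac{n+1+a}{4}\int h^2 e^{-|X|^2/4b}y^a\,dX+\frac{1}{2}\int h\langle X,\nabla h\rangle e^{-|X|^2/4b}y^a\,dX$ by integrating by parts against the radial field $X$, then absorb the cross term via Young's inequality with $\varepsilon=1/4b$. The only cosmetic difference is that the paper phrases the integration by parts as $\frac{|X|^2}{8b}e^{-|X|^2/4b}=-\frac14\langle X,\nabla e^{-|X|^2/4b}\rangle$ and notes the boundary term vanishes because $\langle X,\nu\rangle=0$ on $\{y=0\}$, whereas you compute $\operatorname{div}(Xy^ae^{-|X|^2/4b})$ directly and invoke $y^{a+1}\to 0$; these are equivalent.
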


\begin{proof}
We observe that since $\nabla(e^{-|X|^2/4b}) = - \frac{X}{2b} e^{-|X|^2/4b}$, we can write
\begin{align*}
& \int_{\R^{n+1}_+} h^2 \frac{|X|^2}{8b} e^{-|X|^2/4b}y^adX 
 =-\frac{1}{4} \int_{\R^{n+1}_+} h^2 \langle X, \nabla_X e^{-|X|^2/4b}\rangle  y^a dX.\end{align*}
Keeping in mind that by Euler formula $\langle X,\nabla_X y^a\rangle = a y^a$, that $\operatorname{div}_X X = n+1$, and that denoting with $\nu$ the outer unit normal to a sufficiently large half-ball $\mathbb B_r$ we have $\langle X,\nu\rangle = 0$ on $\p \mathbb B_r \cap \{y=0\}$, integrating by parts in the latter identity we find
\[
\int_{\R^{n+1}_+} h^2 \frac{|X|^2}{8b} e^{-|X|^2/4b}y^adX= \frac{n+1+a}{4} \int_{\R^{n+1}_+}  h^2 e^{-|X|^2/4b}  y^a dX
 + \frac{1}{2}\int_{\R^{n+1}_+} h \langle\nabla_X h, X\rangle e^{-|X|^2/4b} y^a dX.
\]
Cauchy-Schwarz inequality now gives
\[
\frac{1}{2}\int_{\R^{n+1}_+} h \langle\nabla_X h, X\rangle e^{-|X|^2/4b} y^a dX \leq \int_{\R^{n+1}_+} \frac{|X|^2}{16 b} h^2 e^{-|X|^2/4b} y^a dX + b \int_{\R^{n+1}_+} |\nabla h|^2 e^{-|X|^2/4b} y^a dX.
\]
Substituting in the above equality we reach the desired conclusion.
 
\end{proof}

The next result will be needed in the proof of (i) in Theorem \ref{db1}. Its proof relies on Lemma \ref{escves3} and is completely analogous to that of \cite[Lemma 4]{EFV} for the case $a=0$. We therefore refer to that source and omit the relevant details.
 
\begin{lemma}\label{escves4}
Let $h \in C_0^{\infty}(\overline{\R^{n+1}_+})$. Assume $N$ and $\theta$ verify $N \operatorname{log}(N \theta) \geq 1$ and that the inequality 
\begin{align*}
2b \int_{\R^{n+1}_+} |\n h|^2 e^{-|X|^2/4b}y^a dX + \frac{n+1+a}{2} \int_{\R^{n+1}_+} h^2  e^{-|X|^2/4b} y^a dX \leq N \operatorname{log}(N \theta) \int_{\R^{n+1}_+} h^2  e^{-|X|^2/4b} y^a dX
	\end{align*}
hold when $0<b \leq 1/N \operatorname{log}(N \theta)$. Then for all $r \leq \frac{1}{2}$ one has 
\begin{align*}
	\int_{\mathbb B_{2r}} h^2 y^a dX \leq (N \theta)^N \int_{\mathbb B_{r}} h^2 y^a dX.
\end{align*}
\end{lemma}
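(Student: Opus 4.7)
The plan is to follow the standard Escauriaza--Fern\'andez--Vega strategy (as referenced by the authors), adapted to the weight $y^a$. The idea is to use the hypothesis, in combination with Lemma \ref{escves3}, to extract a Gaussian tail bound for $h$, and then convert this tail bound into an annular comparison between concentric half-balls by judiciously choosing the parameter $b$.

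First, I would insert the conclusion of Lemma \ref{escves3} into the left-hand side of the hypothesised inequality. The Hardy--type lower bound absorbs exactly the $\nabla h$ term and the $\frac{n+1+a}{2}$ term, so one is left with
\[
\int_{\R^{n+1}_+} h^2 \, \frac{|X|^2}{8b}\, e^{-|X|^2/4b}\, y^a\, dX \;\leq\; N\log(N\theta)\int_{\R^{n+1}_+} h^2\, e^{-|X|^2/4b}\, y^a\, dX,
\]
valid for every $0<b\leq 1/(N\log(N\theta))$. Restricting the left integral to $\{|X|\geq R\}$, on which $|X|^2/(8b)\geq R^2/(8b)$, yields the Gaussian tail estimate
\[
\int_{\{|X|\geq R\}} h^2 \, e^{-|X|^2/4b}\, y^a\, dX \;\leq\; \frac{8b\,N\log(N\theta)}{R^2}\int_{\R^{n+1}_+} h^2\, e^{-|X|^2/4b}\, y^a\, dX.
\]

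Next, for $r\leq 1/2$ I would choose $b$ proportional to $r^{2}/(N\log(N\theta))$, with a constant small enough that the factor on the right is at most $1/2$ when $R=r$ (and so that the admissibility condition $b\leq 1/(N\log(N\theta))$ is automatic, since $r^2\leq 1/4$). This yields
\[
\int_{\R^{n+1}_+} h^2\, e^{-|X|^2/4b}\, y^a\, dX \;\leq\; 2\int_{\mathbb B_r} h^2\, e^{-|X|^2/4b}\, y^a\, dX \;\leq\; 2\int_{\mathbb B_r} h^2\, y^a\, dX,
\]
the second inequality because $e^{-|X|^2/4b}\leq 1$.

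To pass from Gaussian-weighted to unweighted integrals on the large ball, note that on $\mathbb B_{2r}$ we have $e^{-|X|^2/4b}\geq e^{-r^2/b}$, hence
\[
\int_{\mathbb B_{2r}} h^2\, y^a\, dX \;\leq\; e^{r^2/b}\int_{\mathbb B_{2r}} h^2\, e^{-|X|^2/4b}\, y^a\, dX \;\leq\; e^{r^2/b}\int_{\R^{n+1}_+} h^2\, e^{-|X|^2/4b}\, y^a\, dX.
\]
Combining the last two displays and using $r^2/b = c\, N\log(N\theta)$ with the chosen constant $c$, one obtains $e^{r^2/b} = (N\theta)^{cN}$, and therefore a doubling inequality of the form $\int_{\mathbb B_{2r}} h^2 y^a dX \leq 2(N\theta)^{cN}\int_{\mathbb B_r} h^2 y^a dX$. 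The only remaining issue is bookkeeping: one must choose the constant in $b$ so that $2(N\theta)^{cN}\leq (N\theta)^{N}$, which is possible after replacing $N$ by a fixed multiple (absorbing the factor $2$ into the power using $N\log(N\theta)\geq 1$). The main (minor) obstacle is precisely this constant tracking; the weight $y^a$ enters only through the Hardy inequality, which has already been established in Lemma \ref{escves3}, so no new analytic difficulty arises compared with the $a=0$ case treated in \cite{EFV}.
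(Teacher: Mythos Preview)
Your proposal is correct and takes essentially the same approach as the paper, which simply cites \cite[Lemma 4]{EFV} and states that the proof carries over verbatim with the weight $y^a$ once Lemma \ref{escves3} is in hand. Your one caveat about constant bookkeeping is exactly the wrinkle the paper implicitly absorbs by repeatedly enlarging $N$ throughout Section \ref{s:main}; as stated the conclusion really yields $(N\theta)^{CN}$ for a universal $C$, which is harmless for the application in Theorem \ref{db1}.
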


We will also repeatedly use the following time-independent trace inequality. For its proof see  \cite[p. 65]{Ru}.

\begin{lemma}[Trace inequality]\label{tr}
Let   $f\in C_0^\infty(\R^{n+1}_+)$. There exists a constant $C_0 = C_0(n,a)>0$ such that for every $\sigma>1$ one has
\[
\int_{\Rn} f(x,0)^2 dx \le C_0 \left(\sigma^{1+a} \int_{\R^{n+1}_+} f(X)^2 y^a dX + \sigma^{a-1} \int_{\R^{n+1}_+} |\nabla f(X)|^2 y^a dX\right).
\]
\end{lemma}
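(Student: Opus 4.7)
The plan is to start from the fundamental theorem of calculus in the normal direction and then to average the resulting pointwise bound against a carefully chosen bump function whose $\sigma$-dependent support captures both the $\sigma^{1+a}$ and $\sigma^{a-1}$ factors on the right.

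First I would fix a smooth cutoff $g\in C^\infty_c((1/2,1))$ with $\int_0^\infty g(y)\,dy = 1$ and set $g_\sigma(y) = \sigma\, g(\sigma y)$, so $g_\sigma$ is supported in $[1/(2\sigma),1/\sigma]$ and has unit total mass. For $f\in C_0^\infty(\R^{n+1}_+)$ the identity
\[
f(x,0) = f(x,y) - \int_0^y f_y(x,s)\,ds
\]
together with $(A-B)^2\le 2A^2+2B^2$ and Cauchy--Schwarz applied with the weights $s^{a/2}$ and $s^{-a/2}$ (which is legitimate because $a<1$ guarantees integrability of $s^{-a}$ near the origin) gives the pointwise bound
\[
f(x,0)^2 \le 2 f(x,y)^2 + \frac{2}{1-a}\, y^{1-a} \int_0^y f_y(x,s)^2 s^a\,ds.
\]
Since $\int g_\sigma = 1$, I can write $f(x,0)^2 = \int_0^\infty g_\sigma(y) f(x,0)^2\,dy$ and apply the above pointwise bound inside the integral.

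For the first resulting term, on the support of $g_\sigma$ we have $y\asymp 1/\sigma$, so $g_\sigma(y) y^{-a}\le C(a)\,\sigma^{1+a}$ uniformly in both signs of $a\in(-1,1)$; hence
\[
\int_0^\infty g_\sigma(y) f(x,y)^2\,dy \le C\,\sigma^{1+a}\int_0^\infty f(x,y)^2 y^a\,dy.
\]
For the second term I would swap the order of integration and use that on the support of $g_\sigma$ one has $y^{1-a}\le \sigma^{a-1}$ (again valid because $a<1$), obtaining
\[
\int_0^\infty g_\sigma(y)\, y^{1-a}\!\!\int_0^y f_y(x,s)^2 s^a\,ds\,dy \le \sigma^{a-1}\int_0^\infty f_y(x,s)^2 s^a\,ds.
\]
Integrating these two inequalities in $x\in\Rn$ and using $f_y^2\le |\nabla f|^2$ yields the stated inequality with some $C_0 = C_0(n,a)$.

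The main obstacle is not a deep one but rather technical bookkeeping: the bounds $g_\sigma(y)y^{-a}\lesssim \sigma^{1+a}$ and $y^{1-a}\lesssim \sigma^{a-1}$ have to be verified carefully for both $a\ge 0$ and $a<0$, and the choice of a bump function with support \emph{bounded away from zero} is essential so that $y^{-a}$ remains harmless when $a>0$. Apart from this, the argument is a direct adaptation of the classical trace inequality to the weight $y^a$, made possible precisely by the assumption $a\in(-1,1)$.
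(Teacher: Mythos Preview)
Your argument is correct. The paper itself does not supply a proof of this lemma; it simply refers the reader to \cite[p.~65]{Ru}, so there is nothing to compare against beyond noting that your averaging-against-a-bump approach is one of the standard ways to obtain weighted trace inequalities of this type, and your verification of the two key bounds $g_\sigma(y)y^{-a}\lesssim\sigma^{1+a}$ and $y^{1-a}\le\sigma^{a-1}$ on the support of $g_\sigma$ is accurate for the full range $a\in(-1,1)$. One minor remark: the constant you produce depends only on $a$ (and $\|g\|_\infty$), not on $n$, which is slightly better than what the statement asserts but of course still consistent with it.
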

In the final step of the proof of Theorem \ref{main}, when we analyse the blowup limit, we will need the following  weak unique continuation result from \cite[Proposition 5.6]{LLR}. The reader should be aware that we are stating their result for the backward equation since this is the form that we use. One obtains one from the other by simply changing $t$ into $-t$.

\begin{prop}\label{wucp}
Let $U_0$ be a  weak solution to
\begin{equation}\label{homog}
\begin{cases}
y^a \partial_t U_0 + \D(y^a \nabla U_0) =0\ \text{ in  $\mathbb B_1 \times [0,1)$,}
\\
\py U_0((x,0), t) \equiv 0\ \text{for all $(x,t)  \in B_1 \times [0,1)$,}
\end{cases}
\end{equation}
such that $U_0((x,0), t) \equiv 0$ for all $(x,t) \in B_1 \times [0,1)$. 
Then $U_0 \equiv 0$ in $\mathbb B_1 \times [0,1)$.
\end{prop}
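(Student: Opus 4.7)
The plan is to remove the thin boundary $\{y=0\}$ from consideration by extending $U_0$ across it using both pieces of vanishing Cauchy data, thereby reducing the statement to a weak unique continuation principle for a degenerate parabolic equation on a cylindrical domain with no lateral boundary. Specifically, since both the trace $U_0(\cdot,0,\cdot)$ and the weighted conormal derivative $\py U_0(\cdot,0,\cdot)$ vanish on $B_1 \times [0,1)$, I would define the zero extension
\begin{equation*}
\tilde U_0(x,y,t) \;=\; \begin{cases} U_0(x,y,t), & y\ge 0, \\ 0, & y<0, \end{cases}
\end{equation*}
and view it on the symmetric cylinder $B_1 \times (-1,1) \times [0,1)$.

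To verify that $\tilde U_0$ is a weak solution of $|y|^a \partial_t \tilde U_0 + \D(|y|^a\nabla \tilde U_0) = 0$ on the enlarged cylinder, one tests against an arbitrary $\varphi \in C_c^\infty(B_1\times(-1,1)\times[0,1))$, splits the weak integral at $\{y=0\}$, discards the trivially vanishing $\{y<0\}$ contribution, and integrates by parts on $\{y>0\}$; the only lingering boundary term is $\int \py U_0(x,0,t)\,\varphi(x,0,t)\,dx\,dt$, which vanishes by hypothesis. The regularity of $U_0$, $\nabla U_0$, and $y^a U_y$ up to the thin set needed to justify this computation is supplied by Lemma \ref{reg1}. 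By construction $\tilde U_0$ vanishes identically on the open set $B_1 \times (-1,0) \times [0,1)$, so the proof will be complete once one concludes $\tilde U_0 \equiv 0$ on the connected extended cylinder, which immediately gives $U_0 \equiv 0$ on $\mathbb B_1 \times [0,1)$ by restriction.

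This last step, namely weak unique continuation for the $A_2$-weighted backward-parabolic operator $|y|^a\partial_t + \D(|y|^a\nabla\cdot)$, is the main obstacle. The weight $|y|^a$ is non-smooth across $\{y=0\}$, so one cannot invoke classical analyticity in the spatial variable to propagate the zero set across the thin set, nor reduce naively to a slice-by-slice elliptic UCP. The natural route is a Carleman estimate tailored to the $A_2$-weighted parabolic setting in the spirit of Fabes--Garofalo--Lin, preceded by a time-reversal $\tau = 1 - t$ to convert the equation into forward-parabolic form; an alternative is an Almgren--Poon-type space-time frequency function centered on the vanishing set, in the same spirit as the monotonicity techniques employed elsewhere in this paper. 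Either route forces $\tilde U_0 \equiv 0$ on $B_1\times(-1,1)\times[0,1)$, completing the proof.
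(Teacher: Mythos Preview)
The paper does not prove Proposition \ref{wucp}; it is quoted from \cite[Proposition 5.6]{LLR} and used as a black box in the blowup argument at the end of Section \ref{s:main}. So there is no in-paper proof to compare your proposal against.

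Your reduction is the natural one and is correct as far as it goes: with both pieces of Cauchy data vanishing on $\{y=0\}$, the zero extension $\tilde U_0$ is indeed a weak solution of the $|y|^a$-weighted equation on the symmetric cylinder (the only boundary term produced by the integration by parts is $\int \py U_0\,\varphi$, which vanishes by hypothesis), and it vanishes identically on the open half $\{y<0\}$. Your appeal to Lemma \ref{reg1} is slightly imprecise---that lemma is stated for global solutions of \eqref{wk}---but a local version with $V\equiv 0$ holds by the same arguments and is what you actually need here.

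You correctly flag the remaining step---weak unique continuation across $\{y=0\}$ for the $|y|^a$-weighted parabolic operator---as the main obstacle, but you then dispose of it by asserting that a Carleman estimate ``in the spirit of Fabes--Garofalo--Lin'' or a weighted Almgren--Poon frequency ``forces $\tilde U_0\equiv 0$''. This is where the real content lies: the singular set of the $A_2$ weight coincides exactly with the hyperplane you must cross, so neither classical parabolic Carleman estimates (which require Lipschitz leading coefficients) nor the unweighted Poon frequency applies off the shelf, and the space-like propagation you need is not a formal consequence of either. Establishing the required estimate in this degenerate setting is itself a nontrivial result and is precisely what \cite{LLR} supplies. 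In effect, your proposal carries out the standard preliminary reduction and then defers the essential step to the same external reference the paper already invokes.
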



\section{Proof of Theorem \ref{main}}\label{s:main}

In this section we prove our main result, Theorem \ref{main}. For notational purposes it will be convenient to work with the following backward version of problem \eqref{wk} in the cylinder $\mathbb Q_4 = \mathbb B_4 \times (0,16]$
\begin{equation}\label{exprob}
\begin{cases}
y^a \partial_t U + y^a \operatorname{div}(y^a \nabla U)=0\ \ \ \ \ \ \ \ \ \ \ \ \text{in} \ \mathbb Q_4,
\\	
U((x,t),0)= u(x,t)
\\
\py U((x,t),0)= V(x,t) u(x,t)\ \ \ \ \ \ \ \ \ \text{in}\ B_4 \times [0,16).
\end{cases}
\end{equation}
We note that the former can be transformed into the latter by changing $t \to -t$ and a parabolic rescaling $U_{r_0}(x,t)= U(r_0x, r_0^2t)$, for small enough $r_0$. We also emphasise that, for the sake of simplifying the notation, in \eqref{exprob} we have incorporated in the potential $V$ the normalising constant $2^{a} \frac{\Gamma(\frac{1+a}{2})}{\Gamma(\frac{1-a}{2})}$ in \eqref{wk}. 
Before proceeding we furthermore alert the reader that, using the regularity result in Lemma \ref{reg1} above, similarly to what was done in \cite[Section 6]{BG}, the computations in the ensuing Lemmas \ref{mont} and \ref{db1} can be rigorously justified by first considering integrals in the region $\{y>\ve\}$ and then letting $\ve \to 0$. 

Our first  lemma is a  monotonicity in time result which in our context  finally allows the passage of information to $t=0$ for the extension problem \eqref{exprob}.  This is akin to \cite[Lemma 1]{EFV}. However, as the reader will see, the proof of this lemma is  somewhat subtler in our situation since the extension operator is not translation invariant in the $y$ variable and therefore one requires a careful analysis  using the  bounds  on the  fundamental solution of the extension PDE in order to obtain the required estimates. Moreover our situation is also somewhat complicated by the presence of the weighted Neumann condition. 

With this being said, we now introduce an assumption that will remain in force for the rest of the section up to the proof of Theorem \ref{main}. When we work with a solution $U$ of the problem \eqref{exprob}, we will always assume that
\begin{equation}\label{ass}  
\int_{\mathbb B_1} U(X,0)^2 y^adX >0.
\end{equation}
As a consequence of such hypothesis the number 
\begin{equation}\label{theta}
\theta \overset{def}{=}\frac{\int_{\mathbb Q_4} U(X,t)^2 y^adXdt }{\int_{\mathbb B_1} U(X,0)^2 y^adX}
\end{equation}
will be well-defined. In the remainder of this work the symbol $\theta$ will always mean the number defined by \eqref{theta}. 
\begin{lemma}\label{mont} 
Let $U$ be a solution of \eqref{exprob} in $\mathbb Q_4$. Then there exists a constant $N = N(n,a,||V||_1)>2$ such that $N\operatorname{log}(N\theta) \geq 1$, and for which the following inequality holds for $0\leq t \leq 1/{N\operatorname{log}(N\theta)}$,
\begin{align*}
N\int_{\mathbb B_2} U(X,t)^2 y^adX \geq \int_{\mathbb B_1} U(X,0)^2 y^adX.
\end{align*}	
\end{lemma}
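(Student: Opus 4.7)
The plan is to set up a localized $L^2$ energy and derive a time differential inequality for it. Fix a cutoff $\eta \in C_0^\infty(\mathbb B_2)$ with $\eta \equiv 1$ on $\mathbb B_1$, $0 \le \eta \le 1$, $|\n\eta|\lesssim 1$, and define
\[
F(s) := \int_{\R^{n+1}_+} U(X,s)^2 \eta(X)^2 y^a\, dX.
\]
Since $F(0) \ge \int_{\mathbb B_1} U(X,0)^2 y^a\, dX$ and $F(T) \le \int_{\mathbb B_2} U(X,T)^2 y^a\, dX$, the desired conclusion follows once we establish $F(0) \le N F(T)$ for every $0 \le T \le 1/(N\log(N\theta))$. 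Differentiating $F$ in $s$, using the PDE $y^a U_s = -\D(y^a \n U)$ in $\mathbb Q_4$ together with the Neumann condition $\py U = V u$ on the flat boundary, integration by parts yields
\[
F'(s) = 2\int y^a |\n U|^2 \eta^2\, dX + 4\int y^a U \eta\, \n U \cdot \n \eta\, dX + 2\int V u^2 \eta^2\, dx.
\]

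Young's inequality absorbs the cross term into a fraction of the first (positive) term at the price of a residual $\lesssim \int_{\mathrm{supp}\,\n\eta} y^a U^2\, dX$, and the potential integral is handled by the trace inequality Lemma~\ref{tr} applied to $U\eta$, with parameter $\sigma = \sigma(n,a,K)$ chosen large so that the gradient contribution in the trace bound is absorbed by another fraction of $\int y^a |\n U|^2 \eta^2\, dX$ (possible since $a<1$), while its $L^2$ part contributes a multiple of $F(s)$. The outcome is a differential inequality
\[
F'(s) \ge -C_1 F(s) - C_2 \int_{\mathbb B_2 \setminus \mathbb B_1} y^a U(X,s)^2\, dX,
\]
with $C_1, C_2$ depending only on $n$, $a$, and $\|V\|_1$. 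Integrating via Gronwall,
\[
F(0) \le e^{C_1 T} F(T) + C_2 e^{C_1 T} \int_0^T \!\!\int_{\mathbb B_2 \setminus \mathbb B_1} y^a U^2\, dX\, ds,
\]
and the definition \eqref{theta} of $\theta$ bounds the last integral by $\int_{\mathbb Q_4} y^a U^2\, dX\, dt = \theta \int_{\mathbb B_1} U(X,0)^2 y^a\, dX \le \theta F(0)$.

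The \emph{main obstacle} is that this raw bound is self-referential: the factor $\theta$ multiplies $F(0)$ on the right, so it does not close when $\theta \gtrsim 1$. To produce the claimed $\log(N\theta)$-time threshold I plan to replace the plain cutoff $\eta$ by a Gaussian-modulated weight $\eta(X) e^{-|X|^2/8b}$, with the parameter $b$ tied to $T$. The Hardy-type Lemma~\ref{escves3} then neutralises the additional terms generated by the Gaussian derivatives inside the energy identity, while the Bessel asymptotics \eqref{bessel}--\eqref{zero} for the extension kernel $\G$ in \eqref{funda}, together with the semigroup bounds recorded in Section~\ref{s:n}, replace the crude tail $\int_{\mathbb B_2 \setminus \mathbb B_1} y^a U^2\, dX$ by a quantity carrying an extra prefactor decaying exponentially in $1/b$. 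Matching $b$ to the scale $T \le 1/(N\log(N\theta))$ with $N = N(n,a,\|V\|_1)$ large enough then closes the Gronwall loop and yields $F(0) \le N F(T)$. The technical difficulty, foreshadowed in the paragraph preceding the statement, is that $\La$ is not translation invariant in $y$ and the boundary condition is a \emph{weighted} Neumann one, so one cannot directly transplant the translation-invariant Gaussian calculation of \cite{EFV} and must instead work through the explicit pointwise and integral estimates on $\G$.
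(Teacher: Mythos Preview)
Your diagnosis of the obstacle in the naive cutoff argument is exactly right: the raw tail term $\int_{\mathbb B_2\setminus\mathbb B_1} y^a U^2$ only feeds back a factor $\theta$ and the loop does not close. However, your proposed repair --- a \emph{static} Gaussian weight $\eta(X)e^{-|X|^2/8b}$ with $b$ tied to $T$ --- has a genuine gap. With such a weight the modified energy $\tilde F(0)$ controls $\int_{\mathbb B_1}U(X,0)^2y^a\,dX$ only up to the factor $e^{-1/(8b)}$, because the Gaussian is already small on $\mathbb B_1$ when $b$ is small. So even if you establish $\tilde F(0)\le C\tilde F(T)$ with a moderate $C$, undoing the weight costs you $e^{1/(8b)}$, which for $b\sim 1/\log(N\theta)$ is exactly of size $(N\theta)^{c}$ --- precisely the uncontrolled growth you were trying to avoid. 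A time-independent Gaussian cannot simultaneously concentrate at $t=0$ (to recover the unweighted $\mathbb B_1$-mass without loss) and provide $e^{-c/b}$ tail decay.

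The paper resolves this by using the \emph{time-dependent} Neumann heat kernel $\G(X_1,X,t)$ of \eqref{funda} with a variable pole $X_1$, setting $H(t)=\int f(X,t)^2\,\G(X_1,X,t)\,y^a\,dX$ for $f=\phi U$. Two features of this choice are decisive. First, the semigroup property gives $H(t)\to U(X_1,0)^2$ as $t\to 0^+$ with \emph{no} loss, so after proving the differential inequality $H'(t)\ge -C t^{-(1+a)/2}H(t)-Ce^{-1/(Ct)}\|U\|_{L^2(\mathbb Q_4)}^2$ one integrates, then integrates again in $X_1$ over $\mathbb B_1$ and invokes stochastic completeness \eqref{Ga1} to bound the left side by $\int_{\mathbb B_2}U(X,t)^2 y^a\,dX$. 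Second, because $\G$ itself solves the extension equation (with \eqref{neu}), the terms in $H'(t)$ coming from $\G_t$ are exactly absorbed, and the only error terms live on $\operatorname{supp}\nabla\phi\subset\mathbb B_2\setminus\mathbb B_{3/2}$; there the Gaussian bounds on $\G$ obtained from the Bessel asymptotics \eqref{bessel} yield the $e^{-c/t}$ factor you anticipated. The boundary/trace contribution is handled by Lemma~\ref{tr} with $\sigma=t^{-1/2}$, but this produces a dangerous $t^{-2}$ weight that must be tamed by an integration-by-parts of Hardy type (carried out inline in the paper); this is the step where the non--translation-invariance in $y$ and the lower bound \eqref{belowbd} for $\G$ are genuinely used. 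In short, the weight must evolve with the equation and carry a pole parameter; a frozen Gaussian centered at the origin does not suffice.
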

\begin{proof}
Let  $f= \phi U,$ where $\phi \in C_0^{\infty}(\mathbb B_2)$ is a spherically symmetric cutoff such that $0\le \phi\le 1$ and $\phi \equiv1$ on $\mathbb B_{3/2}.$ Since $U$ solves \eqref{exprob} and $\phi$ is independent of $t$, it is easily seen that the function $f$ solves the problem
\begin{equation}\label{feq}
\begin{cases}
y^a f_t + \D(y^a \nabla f) = 2 y^a \langle\nabla U,\nabla \phi\rangle  + \D(y^a \nabla \phi) U\ \ \ \ \ \ \ \text{in} \ \mathbb Q_4,
\\	
f((x,t),0)= u(x,t)\phi(x,0)
\\
\py f((x,t),0)= V(x,t) f(x,t)\ \ \ \ \ \ \ \ \ \ \ \ \ \ \ \ \ \ \ \ \ \ \ \ \ \ \ \ \ \ \ \ \ \text{in}\ B_4 \times [0,16).
\end{cases}
\end{equation}
Since $\phi$ is symmetric in $y$, we have $\phi_y \equiv 0$ on the thin set $\{y=0\}$. This fact and the smoothness of $\phi$ imply that $\frac{\phi_y}{y}$ be bounded up to $\{y=0\}$. Therefore we observe that the following is true 
\begin{equation}\label{obs1}
\begin{cases}
\operatorname{supp} (\nabla \phi) \cap \{y>0\}  \subset \mathbb B_2 \setminus \mathbb B_{3/2}
\\
|\D(y^a \nabla \phi)| \leq C y^a\ \mathbf 1_{\mathbb B_2 \setminus \mathbb B_{3/2}},
\end{cases}
\end{equation}
where for a set $E$ we have denoted by $\mathbf 1_E$ its indicator function.

To establish the lemma we now fix a point $X_1=(x_1,y_1)\in \R^{n+1}_+$ and introduce the quantity
\begin{align*}
H(t) = \int_{\R^{n+1}_+} f(X,t)^2 \G(X_1,X,t) y^a dX = \mathscr P_t^{(a)}(f(\cdot,t)^2)(X_1).
\end{align*}
We observe explicitly that by the semigroup property of $\mathscr P_t^{(a)}$ and the fact that $\phi \equiv 1$ in $\mathbb B_1$, we have for every $X_1\in \mathbb B_1$
\begin{equation}\label{H0}
\underset{t\to 0^+}{\lim}\ H(t) = f(X_1,0)^2 = U(X_1,0)^2. 
\end{equation}
We intend to establish the following. 

\medskip

\noindent \emph{\underline{Claim}}: There exist constants $C(n,a), C(n,a,||V||_1)>0$ and $0<t_0 = t_0(n,a,||V||_{\infty})<1$ such that for $X_1\in \mathbb B_1$ and $0<t<t_0$ one has 
\begin{equation}\label{mkg1}
H'(t) \geq -C(n,a)||V||_{\infty} t^{-\frac{1+a}2} H(t) -C(n,a,||V||_1) e^{-\frac{1}{C(n,a)t}}||U||_{L^2(\mathbb Q_4, y^a dXdt)}^2.
\end{equation}

\medskip

Once the \emph{\underline{Claim}} is proved we can complete the proof of the lemma as follows. With 
\begin{equation}\label{C}
C=\max\{C(n,a)(||V||_{\infty}+1) +C(n,a,||V||_1), 1\},
\end{equation}
we easily obtain from \eqref{mkg1} for $0<t<t_0$ 
\[
\left[e^{\frac{2C}{1-a} t^{\frac{1-a}2}}H(t)\right]' \geq -C e^{\frac{2C}{1-a} t^{\frac{1-a}2}} e^{-\frac{1}{Ct}}||U||_{L^2(\mathbb Q_4, y^a dXdt)}^2.
\]
For a given $0<t<t_0$ we now integrate such differential inequality on the interval $(0,t)$. Keeping \eqref{H0} in mind and that $a<1$ we find for any fixed $X_1 \in \mathbb B_1$
\begin{equation}\label{N0H}
N_0 H(t) \geq U(X_1,0)^2 -N_0 e^{-\frac{1}{N_0 t}}||U||_{L^2(\mathbb Q_4, y^a dXdt)}^2,
\end{equation} 
where, if $t_0>0$ is as in the claim, we have let $N_0=C e^{\frac{2C}{1-a} t_0^{\frac{1-a}2}}$ with $C$ as in \eqref{C}. We note explicitly that $N_0\ge 1$.
Integrating now \eqref{N0H} with respect to $X_1\in \mathbb B_1$, exchanging the order of integration and using \eqref{Ga1}, we obtain for $t \leq t_0$
 \begin{align}\label{11}
 N_0\int_{\mathbb B_2}U(X,t)^2 y^a dX \geq \int_{\mathbb B_1}U(X,0)^2 y^a dX - N_0 e^{-\frac{1}{N_0 t}}||U||_{L^2(\mathbb Q_4, y^adXdt)}^2,
 \end{align}
Note that in \eqref{11} we have renamed for convenience  the variable $X_1$ as $X$. If we now use the $L^{\infty}$ bounds on $U$ in \cite[Theorem 5.1]{BG} we conclude that there exist a constant $C(n,a,||V||_1)>0$ such that
\begin{equation}\label{N0}
\int_{\mathbb B_1}U(X,0)^2 y^a dX  \leq  C(n,a,||V||_1)\int_{\mathbb Q_4}U(X,t)^2 y^adXdt.
\end{equation}
The estimate \eqref{N0} implies the crucial conclusion that $\theta$ is universally bound from below away from zero, i.e.  
\[
\theta = \frac{\int_{\mathbb Q_4}U(X,t)^2 y^adXdt}{\int_{\mathbb B_1}U(X,0)^2 y^a dX} \ge \frac{1}{C(n,a,||V||_1)} >0.
\]
Therefore, by possibly adjusting the choice of the constant $C$ in the above definition of $N_0$, depending on the constant $C(n,a,||V||_1)$, we can arrange that $ \operatorname{log}(N_0\theta) >1$. Having fixed $N_0$ in this way we now take $N=2N_0/t_0>2N_0$, the second inequality being justified since $0<t_0<1$ in the claim. With such choice of $N$ we claim that 
\[
0< t\leq 1/N \operatorname{log}(N\theta)\ \implies\ 0<t \leq t_0.
\] 
For this to be true it suffices to have $\frac{1}{N \operatorname{log}(N\theta)}\le t_0$, which is equivalent to $2 N_0 \log(\frac 2{t_0} N_0\theta) \ge 1$. Since $N_0\ge 1$, this is obviously true by our choice of $N_0$. Furthermore, it is also obvious that $N\log(N\theta)\ge 1$. Suppose finally that $0<t\le 1/N \operatorname{log}(N\theta)$. This trivially implies $0<t\le 1/N_0 \log(2N_0\theta)$ and therefore $N_0 e^{-\frac{1}{N_0 t}} \theta \le \frac 12$, which means
\[
N_0 e^{-\frac{1}{N_0 t}}||U||_{L^2(\mathbb Q_4,y^adX)}^2 \leq \frac{1}{2}\int_{\mathbb B_1}U(X,0)^2 y^adX.
\] 
Substituting this inequality in  \eqref{11} we thus obtain
  $$	2N_0\int_{\mathbb B_2}U(X,t)^2y^adX \geq \int_{\mathbb B_1}U(X,0)^2 y^adX,$$
which finally proves the lemma.

We are thus left with proving the \emph{\underline{Claim}}. Henceforth, we will routinely omit the domain of integration $\R^{n+1}_+$ in all integrals involved (the reader should keep in mind that the function $f$ is actually supported in the upper half-ball $\mathbb B_2\subset \R^{n+1}_+$). If instead the relevant integral is on $\p \R^{n+1}_+ = \Rn_x\times\{0\}_y$, then we simply write $\{y = 0\}$ for the domain of integration. Also, we will simply write $\G$ instead of $\G(X_1,X,t)$ in all integrals on $\R^{n+1}_+$. In instead the integral is on $\{y=0\}$, the notation $\G$ will stand for $\G(X_1,(x,0),t)$.
Using the divergence theorem, \eqref{feq} and the Neumann condition \eqref{neu} above we obtain for any fixed $X_1\in \mathbb B_1$ and $0<t\le 1$
\begin{align}\label{h1}
H'(t) &  = \int 2 f f_t \G y^a dX +\int f^2 \G_t y^a dX
= \int 2 f f_t \G y^a dX +\int f^2\D(y^a  \nabla\G)dX
\\
&= \int 2 f f_t \G y^a dX -\int \langle\nabla f^2,\nabla \G\rangle y^a dX
= \int 2 f f_t \G y^a dX +\int \D(y^a \nabla f^2) \G dX
\notag\\
& +\int_{\{y=0\}} 2V f^2 \G dx
\notag \\
&= \int 2 f (y^a f_t + \D(y^a \nabla f)) \G  dX
 + \int 2 |\nabla f|^2   \G y^a dX +\int_{\{y=0\}} 2Vf^2\G dx\notag\\
&= I_1 + I_2 + I_3.\notag
\end{align}
Our objective is to prove that:
\begin{itemize}
\item[(i)] for every $X_1\in \mathbb B_1$ and $0<t\le 1$ we have 
\begin{equation}\label{i1}
I_1 \geq - C e^{-\frac{1}{M t}} \int_{\mathbb Q_4} U^2 y^a dXdt. 
\end{equation}
\item[(ii)] there exists $t_0<1$ such that for every $X_1\in \mathbb B_1$ and $0<t\le t_0$ one has
\begin{align}\label{traceapp}
|I_3| \leq C(n,a)||V||_{\infty}\left(t^{-\frac{1+a}{2}}\int f^2 \G y^a dX + t^{\frac{1-a}{2}} \int |\nabla f|^2 \G y^a dX\right).
\end{align}
\end{itemize}
With \eqref{i1} and \eqref{traceapp} in hands, we return to \eqref{h1} to find
\begin{align*}
H'(t)  \ge & - C e^{-\frac{1}{M t}} \int_{\mathbb Q_4} U^2 y^a dXdt + 2\int  |\nabla f|^2   \G y^a dX
\\
& - C(n,a)||V||_{\infty}\left(t^{-\frac{1+a}{2}} H(t) + t^{\frac{1-a}{2}} \int |\nabla f|^2 \G y^a dX\right).
\end{align*}
If at this point in this inequality we choose $t_0<1$  such that $C(n,a)||V||_{\infty} t_0^{\frac{1-a}{2}} <1$, it is clear that  for $X_1\in \mathbb B_1$ and $0<t<t_0$ we obtain the \emph{Claim} \eqref{mkg1}.

To finally complete the proof of the lemma we are thus left with establishing \eqref{i1} and \eqref{traceapp}. With this objective in mind, with $c(a)$ as in \eqref{bessel} we first write $I_1 = I_1^1 + I_1^2,$ where $I_1^1$ is integral on the set  $ A=\{ X\in \R^{n+1}_+\mid yy_1 > 2 t c(a)\}$ and  $I_1^2$ is the integral on the complement $A^c$ of $A$. We want to bound $I_1$ by appropriately bounding $\G$ from above in each of the sets $A$ and $A^c$. In this respect it will be important for the reader to keep in mind that in view of \eqref{feq} and \eqref{obs1} the integral in the definition of $I_1$ is actually performed in $X\in \mathbb B_2 \setminus  \mathbb B_{3/2}$ and on such set we have for every $X_1\in \mathbb B_1$
\begin{equation}\label{X}
\frac 12 \le |X-X_1|\le 3.
\end{equation}
Our objective is to prove that when $X_1 \in \mathbb B_1$, $X \in \mathbb B_2 \setminus  \mathbb B_{3/2}$ and $0<t\le 1$, the following bound holds for some universal $M>0$ 
\begin{equation}\label{g4}
\G(X_1, X, t) \leq e^{-\frac{1}{M t}}.
\end{equation}
To prove that \eqref{g4} is true when $X\in A\cap(\mathbb B_2 \setminus  \mathbb B_{3/2})$ we argue as follows. Since for $X\in A$ we have $\frac{yy_1}{2t} >  c(a)$, by the second inequality in \eqref{bessel} we have
\begin{equation}\label{gud}
I_{\frac{a-1}{2}}\left(\frac{y_1 y}{2t}\right) \le C(a) \left(\frac{y_1 y}{2t}\right)^{-1/2} e^{\frac{y_1 y}{2t}}.
\end{equation}
Consider first the case $-1<a \le 0$. Since for $X\in \mathbb B_2$ and $X_1\in \mathbb B_1$ we trivially have $\frac{yy_1}{2t} \leq \frac{y_1^2 +y^2}{2t}\le \frac{4}t$, in such case we have $\left(\frac{y y_1}{2t}\right)^{-a/2} \leq  2^{-a/2}t^{a/2}$. Using this estimate and \eqref{gud} in \eqref{fs} we obtain
\begin{align*}
p^{(a)}(y_1,y,t) & \le 2^{-a/2}t^{a/2} (2t)^{-\frac{a+1}{2}}\left(\frac{y_1 y}{2t}\right)^{\frac{1}{2}}I_{\frac{a-1}{2}}\left(\frac{y_1 y}{2t}\right)e^{-\frac{y_1^2+y^2}{4t}}
\\
& \le C^\star(a) t^{-1/2} e^{-\frac{(y_1-y)^2}{4t}}.
\end{align*}
Combining this bound with \eqref{funda} we conclude that for $X_1\in \mathbb B_1$ and $X\in A$ 
\begin{equation}\label{g1}
\G(X_1, X, t) \leq  C\ t^{-\frac{n+1}{2}} e^{-\frac{|X-X_1|^2}{4t}}.
\end{equation}
If instead $a>0$, then we have $\left(\frac{y y_1}{2t}\right)^{-a/2} \leq	c(a)^{-a/2}$ for $X\in A$. Using this estimate and \eqref{gud} in \eqref{fs} we find this time
\begin{align*}
p^{(a)}(y_1,y,t) & \le 2^{-a/2}t^{a/2} (2t)^{-\frac{a+1}{2}}\left(\frac{y_1 y}{2t}\right)^{\frac{1}{2}}I_{\frac{a-1}{2}}\left(\frac{y_1 y}{2t}\right)e^{-\frac{y_1^2+y^2}{4t}}
\\
& \le C^{\star\star}(a) t^{-\frac{a+1}2} e^{-\frac{(y_1-y)^2}{4t}}.
\end{align*}
Combining this bound with \eqref{funda} we infer that for $X_1\in \mathbb B_1$ and $X\in A$ 
\begin{equation}\label{g2}
\G(X_1, X, t) \leq  C\ t^{-\frac{n+1}{2}} e^{-\frac{|X-X_1|^2}{4t}}.
\end{equation}
From \eqref{g1} and \eqref{g2} and \eqref{X} we conclude that when $X_1 \in \mathbb B_1$, $X \in A\cap(\mathbb B_2 \setminus  \mathbb B_{3/2})$ and $0<t\le 1$, the following bound holds for some universal $C>0$ and for $\ell= \max\{\frac{n+1}{2},\frac{n+1+a}{2}\}$
\[
\G(X_1, X, t) \leq  C\ t^{-\ell} e^{-\frac{9}{4t}}.
\] 
From the latter inequality \eqref{g4} immediately follows when $X\in A\cap(\mathbb B_2 \setminus  \mathbb B_{3/2})$. If instead $X\in A^c\cap(\mathbb B_2 \setminus  \mathbb B_{3/2})$, keeping in mind that on the set  $A^c$ we have $\frac{yy_1}{2t} \le c(a)$, by the first inequality in \eqref{bessel} we obtain that for all $a \in (-1, 1)$,
\[
I_{\frac{a-1}{2}}\left(\frac{yy_1}{2t}\right) \leq C(a) \left(\frac{yy_1}{2t}\right)^{\frac{a-1}{2}}. 
\]
Using this in \eqref{fs} we find
\[
p^{(a)}(y_1,y,t) \le C(a) (2t)^{-\frac{a+1}{2}} e^{-\frac{y_1^2+y^2}{4t}} \le C^\star(a) t^{-\frac{a+1}{2}} e^{-\frac{(y_1-y)^2}{4t}} .
\]
Combining this bound with \eqref{funda} we conclude that for $X_1\in \mathbb B_1$, $0<t\le 1$ and $X\in A^c$ 
\begin{equation*}
\G(X_1,X,t) \leq Ct^{-\frac{n+1+a}{2}} e^{-\frac{|X-X_1|^2}{4t}}.
\end{equation*}
Combining this estimate with \eqref{X} we conclude that  \eqref{g4} also holds for any $X_1 \in \mathbb B_1$, $0<t\le 1$ and $X \in A^c \cap (\mathbb B_2 \setminus  \mathbb B_{3/2})$.

Having proved \eqref{g4} we now insert such inequality in the definition of $I_1$ and using \eqref{feq} and \eqref{obs1} we finally obtain 
\[
|I_1| \le C e^{-\frac{1}{Mt}} \int_{\mathbb B_2} \left(|\nabla U| + |U|\right) |U| y^a.
\]
At this point we invoke the $L^{\infty}$ bounds for $U, \nabla_x U, U_t$ and $y^a U_y$ in Lemma \ref{reg1} to finally conclude that for every $X_1\in \mathbb B_1$ and $0<t\le 1$ the inequality \eqref{i1} holds.

Finally, concerning \eqref{traceapp} we note that \eqref{zero} gives
\[
p^{(a)}(y_1,0,t) = \frac{2^{-a}}{\Gamma((1+a)/2)} t^{-\frac{a+1}{2}} e^{-\frac{y_1^2}{4t}}.
\] 
Combining this observation with \eqref{funda} allows us to write  
$$ I_3 = \int_{\{y=0\}} 2Vf^2\G dx = \int_{\{y=0\}} 2 V f^2 G(x_1,x, t) dx,$$
where for a fixed $X_1\in \mathbb B_1$ we have let
\begin{equation}\label{gig}
G(x_1,x, t) = \frac{(4\pi)^{-\frac n2} 2^{-a}}{\Gamma((1+a)/2)} t^{-\frac{n+a+1}{2}} e^{-\frac{|x-x_1|^2+y_1^2}{4t}}.
\end{equation}
If we now apply the elliptic trace inequality in Lemma \ref{tr} to the function $u= f \sqrt{G} e^{-y^2/8t}$ we obtain for all $\sigma >1$
\begin{align*}
& |I_3| \leq C(n,a)||V||_{\infty}\left(\sigma ^{1+a} \int f^2 G e^{-\frac{y^2}{4t}} y^a dX + \sigma^{a-1} \int |\nabla (f \sqrt{G} e^{-\frac{y^2}{8t}})|^2 y^a dX\right)
\\
&\leq C(n,a)||V||_{\infty}\bigg(\sigma ^{1+a} \int f^2 G e^{-\frac{y^2}{4t}} y^a dX + \sigma^{a-1} \int |\nabla f|^2 G y^a dX
\\
& + \sigma^{a-1} \int f^2 G e^{-\frac{y^2}{4t}}\frac{|X-(x_1,0)|^2}{16t^2} y^a dX\bigg).
\\
\end{align*}
We emphasise at this moment that the power $t^{-2}$ in the last integral in the right-hand side is quite problematic. If, using the support property of $f$, we attempted to plainly bound such integral by $\frac{\sigma^{a-1}}{t^2} \int f^2 G e^{-\frac{y^2}{4t}} y^a dX$ we would end up with a term $t^{-(1+a)}\int f^2 \G y^a dX$ in \eqref{traceapp} and this would jeopardise the whole proof. The next computation deals with this delicate point. Integrating by parts in the second integral below it is easy to recognise that
\begin{align*}
& \int f^2 G e^{-\frac{y^2}{4t}}\frac{|X-(x_1,0)|^2}{16t^2} y^adX = - \frac{1}{8t} \int f^2 \langle\nabla (G e^{-\frac{y^2}{4t}}),(x-x_1,y)\rangle y^a dX\\
&= \frac{n+a+1}{8t} \int f^2 G e^{-\frac{y^2}{4t}} y^a dX + \frac{2}{8t} \int f \langle\nabla f,(x-x_1,y)\rangle G e^{-\frac{y^2}{4t}}y^a dX\notag\\
&\leq  \frac{n+a+1}{8t} \int f^2 G e^{-\frac{y^2}{4t}} y^a dX + 	\int f^2 G e^{-\frac{y^2}{4t}}\frac{|X-(x_1,0)|^2}{32t^2} dX\notag\\ 
& + \frac{1}{2} \int |\nabla f|^2 Ge^{-\frac{y^2}{4t}} y^a dX\notag,
\end{align*}
where in the  last inequality we have used the elementary fact $|AB| \le \frac{A^2+B^2}2$. 
Replacing this estimate in the above bound for $I_3$ we find 
\begin{align}\label{thedevil}
& |I_3| \leq C(n,a)||V||_{\infty}\bigg(\sigma^{1+a}\int f^2 G e^{-\frac{y^2}{4t}} y^a dX + \frac{n+a+1}{4t} \sigma^{a-1} \int f^2 G e^{-\frac{y^2}{4t}} y^a dX 
\\
& +  2\sigma^{a-1} \int |\nabla f|^2 Ge^{-\frac{y^2}{4t}} y^a dX\bigg).
\notag
\end{align}
If for $\nu>-1$ we now define
\[
\Lambda_\nu(z) = z^{-\nu} I_\nu(z),\ \ \ \ \ \ \ y_\nu(z) = \frac{I_{\nu+1}(z)}{I_\nu(z)},
\]
then using \cite[formula (8.15)]{Gcm}
\[
\frac{d}{dz} \log \Lambda_\nu(z) = y_\nu(z), \ \ \ \ \ \  \ \ \ \ \ \ \ z>0,
\]
we recognise that $z\to \Lambda_\nu(z)$ is strictly increasing on $(0,\infty)$. This information and the fact that
\begin{equation}\label{lambda}
\underset{z\to 0^+}{\lim} \Lambda_\nu(z) = \frac{2^{-\nu}}{\Gamma(\nu+1)},
\end{equation}
which follows from \eqref{besseries}, allow to infer that for every $z\in (0,\infty)$
\[
\Lambda_\nu(z) \ge \frac{2^{-\nu}}{\Gamma(\nu+1)}.
\]
Since in view of 
 \eqref{fs} we have
\begin{equation}\label{pa}
p^{(a)}(y_1,y,t)  =(2t)^{-\frac{a+1}{2}}\Lambda_{\frac{a-1}{2}}\left(\frac{y_1 y}{2t}\right) e^{-\frac{y_1^2+y^2}{4t}},
\end{equation}
we conclude from \eqref{funda}, \eqref{pa} and the definition \eqref{gig} of $G(x_1,x, t)$ that
\begin{equation}\label{belowbd}
\G(X_1,X,t) \geq  G(x_1,x,t)  e^{-\frac{y^2}{4t}}.
\end{equation}
Using \eqref{belowbd} in \eqref{thedevil} we obtain the following crucial bound
\begin{align}\label{mk1}
|I_3| & \leq C(n,a)||V||_{\infty}\bigg(\sigma^{1+a}\int f^2 \G y^a dX + \frac{n+a+1}{4t} \sigma^{a-1} \int f^2 \G y^a dX 
\\
& + \sigma^{a-1} \int |\nabla f|^2 \G y^a dX\bigg).
\notag
\end{align}
Letting now $\sigma = 1/\sqrt{t}$ we finally obtain \eqref{traceapp} from \eqref{mk1}, thus completing the proof of the lemma. 
\end{proof}
With Lemma \ref{mont} in hand, our next objective is establishing a a conditional elliptic type doubling inequality which is the key tool in the proof of Theorem \ref{main}. Before we can do that, however, we need to establish several crucial auxiliary tools and we thus turn to setting the stage for them. In what follows, with $b>0$ to be chosen appropriately in the proof of Theorem \ref{db1} below, with a slight abuse of notation, for every $X = (x,y)\in \R^{n+1}_+$ and $t>-b$  we will indicate with $\G(X,t+b)$ the function $\G(X,0,t+b)$ with pole at $(0,-b)\in \R^{n+1}\times \R$, and whenever convenient we will simply write $\G$ instead of $\G(X,t+b)$. We explicitly note the following important consequence of \eqref{funda}, \eqref{pa} 
\begin{equation}\label{Gpole}
\G(X,t+b) = \frac{(4\pi)^{-\frac n2} 2^{-a}}{\Gamma((1+a)/2)} (t+b)^{-\frac{n+a+1}{2}} e^{-\frac{|x|^2 + y^2}{4(t+b)}}.
\end{equation}
Formula \eqref{Gpole} gives in particular
\begin{equation}\label{nG}
\n \G(X,t+b) = - \frac{X}{2(t+b)} \G(X,t+b).
\end{equation}
For future reference we note that, in view of \eqref{Gpole},  we have the following obvious estimate for an appropriate $C(n,a)>0$
\begin{equation}\label{Gring}
\ \ \ \ \ \ \ \ \ \ \ \ \ \ \ \ \ \G(X,t) \le \frac{C(n,a)}{(t+b)^{\frac{n+a+3}{2}}} e^{-\frac{9}{4 (t+b)}},\ \ \ \ \ \ X\in \mathbb B_{7/2} \setminus \mathbb B_3,\ \ t+b>0.
\end{equation}
Also, if we let
$$G_b(x,t) = \frac{(4\pi)^{-\frac n2} 2^{-a}}{\Gamma((1+a)/2)} (t+b)^{-\frac{n}{2}}e^{-\frac{|x|^2}{4(t+b)}},$$
then it is clear from \eqref{funda} and \eqref{Gpole} that
\begin{equation}\label{screwy}
\G((x,0),t+b) =  (t+b)^{-\frac{a+1}{2}} G_b(x,t).
\end{equation}
Using \eqref{screwy} and the equation $\p_t G_b = \Delta G_b$ satisfied by $G_b$, we see that on the thin set $\{y=0\}$ we have
\begin{equation}\label{screwy2}
\G_t = - \frac{a+1}2 (t+b)^{-1} \G + (t+b)^{-\frac{a+1}2} \Delta G_b,\ \ \ \ \ \ \n_x \G = - \frac{x}{2(t+b)} \G.
\end{equation}
In all computations in the remainder of this section we stick to the agreement adopted in the proof of Lemma \ref{mont} that we do not indicate the domain of integration if an integral is taken on $\R^{n+1}_+$. In some computations it will be easier to have a unified notation for the coordinates of $X$, and so we will let  $x_{n+1} = y$. Consequently, derivatives with respect to the variable $y$ will be indicated with $D_{n+1}$. Similarly, $D_{n+1,n+1}, D_{i,n+1}$, $i=1,...,n$ stand for $\p_{yy}, D_{x_i y}$, etc. Whenever convenient the summation convention over repeated indices will be used, so for instance $|\nabla f|^2 = f_i f_i$, where $i = 1,...,n+1$. 

We now consider a solution $U$ of \eqref{exprob} in $\mathbb Q_4$. We fix a spherically symmetric cutoff $\phi \in C_0^{\infty}(\mathbb B_{7/2})$ such that $0\le \phi\le 1$ and $\phi \equiv1$ on $\mathbb B_{3}$, and let  $f = \phi U$. As in the proof of Lemma \ref{mont} $f$ solves the problem \eqref{feq}. We also have that $\frac{\phi_y}{y}$ is bounded up to $\{y=0\}$ and 
\begin{equation}\label{obs01}
\begin{cases}
\operatorname{supp} (\nabla \phi) \cap \{y>0\}  \subset \mathbb B_{7/2} \setminus \mathbb B_{3},
\\
|\D(y^a \nabla \phi)| \leq C y^a\ \mathbf 1_{\mathbb B_{7/2} \setminus \mathbb B_{3}}.
\end{cases}
\end{equation}
Given such $f$ we next introduce the following shifted (in time) \emph{height} functional
\begin{equation}\label{Hb}
H_b(t) = \int_{\R^{n+1}_+} f(X,t)^2 \G(X,t+b) y^a dX,
\end{equation} 
and \emph{energy}
\begin{equation}\label{Db}
D_b(t)=\int_{\R^{n+1}_+} |\nabla f(X,t)|^2 \G(X,t+b)y^adX +  \int_{\{y=0\}}V(x,t)f((x,0),t)^2\G((x,0),t+b)dx.
\end{equation}
We also consider the following shifted \emph{frequency} functional 
\begin{equation}\label{Nb}
N_b(t)=\frac{I_b(t)}{H_b(t)},
\end{equation}
where we have let 
\begin{equation}\label{Ib}
I_b(t)=(t+b) D_b(t).
\end{equation}
The reader should keep in mind that $N_b(t)$ is well defined for all $t>0$ sufficiently small since the hypothesis \eqref{ass} implies that $H_b(t)>0$ for all such $t$.  
We emphasise that, despite the similarity between \eqref{Hb}, \eqref{Db} and \eqref{Nb} and the quantities first introduced in the previous work \cite{BG}, the present situation is completely different since the presence of the cutoff function $\phi$ in their definitions complicates matters considerably. We also mention that, in a completely different context, related nonlocalised versions of the quantities $H_b, D_b$ and $N_b$ have been used in the work \cite{BDGP}.

In the proof of Theorem \ref{db1} it will be important to have the following bound from below for $H_b(t)$.
\begin{lemma}\label{L:below}
There exists $N = N(n,a,||V||_1)>2$ such that for $0< t \le t+b \leq 1/{N\log(N\theta)}$ one has 
\begin{equation}\label{kju}
H_b(t) \geq  N^{-1} e^{-\frac{1}{t+b}} \int_{\mathbb B_1} U(X,0)^2 y^a dX.
\end{equation}
\end{lemma}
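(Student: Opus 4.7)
The idea is to combine the time-monotonicity estimate from Lemma \ref{mont} with a pointwise lower bound on the kernel $\G(X,t+b)$ restricted to the ball $\mathbb B_2$, using only the values where $\phi\equiv 1$.

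First, since $\phi\equiv 1$ on $\mathbb B_3\supset \mathbb B_2$, we have $f(X,t)^2=U(X,t)^2$ for $X\in \mathbb B_2$. Therefore, dropping the integral over $\R^{n+1}_+\setminus \mathbb B_2$, which is nonnegative, we get
\[
H_b(t)\ \ge\ \int_{\mathbb B_2} U(X,t)^2\, \G(X,t+b)\, y^a\, dX.
\]
Next, I would exploit the explicit Gaussian form of $\G$ given in \eqref{Gpole}. For $X\in \mathbb B_2$ one has $|X|^2\le 4$, hence $\frac{|x|^2+y^2}{4(t+b)}\le \frac{1}{t+b}$, which yields the pointwise lower bound
\[
\G(X,t+b)\ \ge\ \frac{(4\pi)^{-n/2}\,2^{-a}}{\Gamma((1+a)/2)}\,(t+b)^{-\frac{n+a+1}{2}}\, e^{-\frac{1}{t+b}}\ \ \ \text{on }\mathbb B_2.
\]
Choosing $N$ large enough that $N\log(N\theta)\ge 1$ (which is already provided by Lemma \ref{mont}), we get $t+b\le 1$, so $(t+b)^{-(n+a+1)/2}\ge 1$ and the bound simplifies to $\G(X,t+b)\ge c(n,a)\,e^{-1/(t+b)}$ on $\mathbb B_2$.

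Combining these two observations gives
\[
H_b(t)\ \ge\ c(n,a)\,e^{-\frac{1}{t+b}}\int_{\mathbb B_2} U(X,t)^2\, y^a\, dX.
\]
Finally I would apply Lemma \ref{mont}: since $0\le t\le t+b\le 1/(N\log(N\theta))$, in particular $t\le 1/(N\log(N\theta))$, so
\[
\int_{\mathbb B_2} U(X,t)^2\, y^a\, dX\ \ge\ \frac{1}{N}\int_{\mathbb B_1} U(X,0)^2\, y^a\, dX.
\]
Putting the two inequalities together and, if necessary, enlarging $N$ to absorb the factor $c(n,a)^{-1}$, one obtains \eqref{kju}.

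There is no real obstacle here; the lemma is essentially a bookkeeping step that packages the monotonicity output of Lemma \ref{mont} together with the Gaussian decay of the kernel into the form needed later (where the factor $e^{-1/(t+b)}$ has to be compared against terms like $e^{-1/(Mt)}$ coming from the error estimates in the frequency computation).
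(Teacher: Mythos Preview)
Your argument is correct and essentially identical to the paper's own proof: both restrict the integral defining $H_b(t)$ to $\mathbb B_2$ (where $\phi\equiv 1$), use the explicit Gaussian form \eqref{Gpole} together with $|X|^2\le 4$ and $t+b\le 1$ to bound $\G(X,t+b)$ below by a constant times $e^{-1/(t+b)}$, and then invoke Lemma~\ref{mont}. The only cosmetic difference is that the paper absorbs the dimensional constant by requiring $t+b\le C(n,a)$ so that $\G\ge e^{-1/(t+b)}$ directly, whereas you keep $c(n,a)$ explicit and fold it into $N$ at the end.
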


\begin{proof} We begin by observing that in view of \eqref{Gpole} we have for every $X\in\mathbb B_2$
\[
\G(X,t+b) \ge \frac{(4\pi)^{-\frac n2} 2^{-a}}{\Gamma((1+a)/2)} (t+b)^{-\frac{n+a+1}{2}} e^{-\frac{1}{t+b}} \ge e^{-\frac{1}{t+b}},
\]
provided $t+b\le C(n,a)$, for an appropriate $C(n,a)>0$. Keeping in mind that $\phi \equiv 1$ on $\mathbb B_2$, from this observation and \eqref{Hb} we find
\[
H_b(t) \ge \int_{\mathbb B_2} U(X,t)^2 \G(X,t+b) y^a dX \ge e^{-\frac{1}{t+b}} \int_{\mathbb B_2} U(X,t)^2  y^a dX.
\]
Applying Lemma \ref{mont} we now infer the existence of a constant $N = N(n,a,||V||_1)>2$ such that $N\operatorname{log}(N\theta) \geq 1$ and for which the following inequality holds for $0\leq t \leq 1/{N\operatorname{log}(N\theta)}$
\begin{align*}
N\int_{\mathbb B_2} U(X,t)^2 y^adX \geq \int_{\mathbb B_1} U(X,0)^2 y^adX.
\end{align*}	
It is clear that, by possibly adjusting the choice of $N$ so that $N\log(N\theta)\ge \max\{1,C(n,a)^{-1}\}$, we obtain the desired conclusion \eqref{kju}.

\end{proof}

In the next statement we indicate with $Z$ the vector field whose action on a function $F(X,t)$ is defined by the formula
\begin{equation}\label{Z}
Z F= \langle\n F,\frac{X}{2(t+b)}\rangle + F_t.
\end{equation}

\begin{lemma}[First variation estimates]\label{L:Db}
There exist constants $C = C(n,a,||V||_1)>0$ and $0<t_0 = t_0(n,a,||V||_{1})<1$ such that for $0<t+b\le t_0$ one has:
\begin{align}\label{Hprime}
\left|H_b'(t) -2 \int \phi^2 U ZU \G y^a dX\right|	 \leq 	  e^{-\frac{17}{8 (t+b)}} \int_{\mathbb Q_4} U^2 y^a dXdt,	
\end{align}
\begin{equation}\label{hb8}
\bigg| \frac{H_b'(t)}{H_b(t)}  -  \frac{2}{t+b} N_b(t) \bigg|\leq   \frac{e^{-\frac{17}{8(t+b)}}\int_{\mathbb Q_4} U^2 y^a dXdt}{H_b(t)},
\end{equation}
and also
\begin{align}\label{db2}
D_b'(t) & \geq 2\int \phi^2 (Z U)^2 \G y^a dX -  e^{-\frac{17}{8 (t+b)}} \int_{\mathbb Q_4} U^2 y^a dXdt
\\
& - \left(\frac{1}{t+b} + \frac{C}{(t+b)^{\frac{a+1}{2}}}\right)D_b(t)- \frac{C}{(t+b)^{\frac{3+a}{2}}} H_b(t).
\notag
\end{align}
\end{lemma}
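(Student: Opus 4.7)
The plan is to differentiate $H_b$ and $D_b$ under the integral sign, substitute time derivatives using the PDE \eqref{feq} for $f=\phi U$, and systematically track two types of error: (a) terms supported in the cutoff annulus $\mathbb B_{7/2}\setminus\mathbb B_3$, absorbed by the pointwise bound \eqref{Gring}, and (b) boundary contributions on $\{y=0\}$ produced by integration by parts and the condition $\partial_y^a f = Vf$, controlled by the trace inequality Lemma \ref{tr} together with \eqref{screwy} and \eqref{screwy2}.

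For \eqref{Hprime} I would compute $H_b'(t) = 2\int f f_t\,\G y^a\,dX + \int f^2\,\G_t\,y^a\,dX$, use $y^a\G_t=\operatorname{div}(y^a\nabla\G)$ and \eqref{nG} to rewrite the second integral (the boundary term at $\{y=0\}$ vanishes by \eqref{neu}) as $2\int f\langle\nabla f,X/(2(t+b))\rangle\G y^a\,dX$. This yields $H_b'(t)=2\int f(Zf)\G y^a\,dX$. Since $\phi_t\equiv 0$, $Zf = \phi\,ZU + U\langle\nabla\phi,X/(2(t+b))\rangle$ and therefore
\begin{equation*}
H_b'(t)-2\int \phi^2 U(ZU)\G y^a\,dX = 2\int \phi U^2\langle\nabla\phi,X/(2(t+b))\rangle\G y^a\,dX.
\end{equation*}
By \eqref{obs01} the right-hand side is supported in $\mathbb B_{7/2}\setminus \mathbb B_3$; on that set \eqref{Gring} yields $\G\leq C(t+b)^{-(n+a+3)/2}e^{-9/(4(t+b))}$. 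Since $9/4>17/8$, the polynomial prefactor (including the extra $(t+b)^{-1}$ from $X/(2(t+b))$) is absorbed into $e^{1/(8(t+b))}$ for $t+b$ small, and the remaining $\int_{\mathbb B_{7/2}}U^2 y^a\,dX$ is dominated by $\int_{\mathbb Q_4}U^2 y^a\,dX\,dt$ via the $L^\infty$ bounds in Lemma \ref{reg1}, giving \eqref{Hprime}.

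The estimate \eqref{hb8} would follow from a parallel computation: using $y^a f_t = -\operatorname{div}(y^a\nabla f)+R$ with $R=2y^a\langle\nabla U,\nabla\phi\rangle+\operatorname{div}(y^a\nabla\phi)U$, integration by parts in $2\int f f_t\G y^a\,dX$ produces the bulk $2\int|\nabla f|^2\G y^a\,dX$, the boundary term $2\int_{\{y=0\}}Vf^2\G\,dx$ (from $\partial_y^a f = Vf$), a cross term with $\nabla\G$ that exactly cancels the $\int f^2\G_t\,y^a\,dX$ piece, and an annulus remainder $2\int fR\,\G\,dX$. Hence $H_b'(t)=2D_b(t)+2\int fR\,\G\,dX$, and the remainder is again bounded by $e^{-17/(8(t+b))}\int_{\mathbb Q_4}U^2 y^a\,dX\,dt$ via \eqref{Gring} and Lemma \ref{reg1}. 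Dividing by $H_b(t)$ and using $\frac{2}{t+b}N_b(t)=\frac{2D_b(t)}{H_b(t)}$ gives \eqref{hb8}.

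The main obstacle is \eqref{db2}. Differentiating $D_b$ and integrating by parts in $2\int\langle\nabla f,\nabla f_t\rangle\G y^a\,dX$ with the help of the equation for $f$ and \eqref{nG} produces, modulo annulus errors, $2\int y^a f_t^2\G\,dX + 2\int y^a f_t\langle\nabla f,X/(2(t+b))\rangle\G\,dX - 2\int_{\{y=0\}}Vff_t\G\,dx$. The other bulk piece $\int|\nabla f|^2\G_t y^a\,dX$, treated via $y^a\G_t = \operatorname{div}(y^a\nabla\G)$ followed by a further integration by parts against the dilation vector field $X$ using Euler's identity $\langle X,\nabla y^a\rangle = a y^a$ and \eqref{nG}, supplies the missing cross term $2\int y^a f_t\langle\nabla f,X/(2(t+b))\rangle\G\,dX$ together with the square $2\int y^a\langle\nabla f,X/(2(t+b))\rangle^2\G\,dX$ and a $-\frac{1}{t+b}\int|\nabla f|^2\G y^a\,dX$ factor. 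Summed, the bulk contributions complete the square $2\int y^a (Zf)^2\G\,dX$, which on replacing $f$ by $\phi U$ and absorbing annulus errors becomes the leading positive term $2\int\phi^2(ZU)^2\G y^a\,dX$ in \eqref{db2}. The boundary derivative $\frac{d}{dt}\int_{\{y=0\}}Vf^2\G\,dx$ splits into three pieces: $\int V_t f^2\G\,dx$ (bounded via Lemma \ref{tr} by $C(t+b)^{-(1+a)/2}D_b+C(t+b)^{-(3+a)/2}H_b$), $2\int Vff_t\G\,dx$ (which cancels the boundary cross term from the bulk computation), and $\int Vf^2\G_t\,dx$, which via \eqref{screwy2} yields $-\frac{1}{t+b}\int_{\{y=0\}}Vf^2\G\,dx$ (combining with the bulk $-\frac{1}{t+b}\int|\nabla f|^2\G y^a\,dX$ to produce the $-\frac{1}{t+b}D_b$ term) plus a lower-order remainder controlled once more by Lemma \ref{tr}. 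The hardest part is to verify that this boundary cancellation is exact and that the Rellich-type identity closes up so that no uncancelled bulk or boundary term survives outside the main square, the $-\frac{1}{t+b}D_b$ contribution, and the trace-inequality errors of the form $\frac{C}{(t+b)^{(1+a)/2}}D_b + \frac{C}{(t+b)^{(3+a)/2}}H_b$.
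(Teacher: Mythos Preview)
Your outline for \eqref{Hprime} and \eqref{hb8} is correct and matches the paper's argument exactly.

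For \eqref{db2} your overall strategy is the paper's, but there is a concrete gap in how you account for two boundary cross terms, and if you follow your sketch literally the estimate does not close. When you treat $\int|\nabla f|^2\G_t\,y^a\,dX$ via $y^a\G_t=\operatorname{div}(y^a\nabla\G)$ and then integrate by parts a second time against the dilation field, a boundary term
\[
2\int_{\{y=0\}} Vf\,\langle\nabla_x\G,\nabla_x f\rangle\,dx
\]
appears (from the $i=n{+}1$ contribution, using $\partial_y^a f=Vf$); you omit it. Separately, your description of $\int_{\{y=0\}}Vf^2\G_t\,dx$ is inaccurate: by \eqref{screwy2} the leading coefficient is $-\tfrac{a+1}{2(t+b)}$, not $-\tfrac{1}{t+b}$, and after one integration by parts in $x$ the remainder contains, besides harmless terms, the piece
\[
-2(t+b)^{-\frac{a+1}{2}}\int_{\{y=0\}} fV\,\langle\nabla_x f,\nabla_x G_b\rangle\,dx,
\]
which involves $\nabla_x f$ on $\{y=0\}$ and is \emph{not} controllable by Lemma~\ref{tr} applied to $f$ (doing so would force the trace of $\nabla_x f$ and hence second derivatives of $U$, which are not bounded by $H_b$ or $D_b$). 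The resolution, which is the key algebraic point in the paper's proof, is that these two boundary cross terms cancel exactly via \eqref{screwy}. Once this cancellation is observed, the surviving boundary contributions in \eqref{doobydoo} are all of the form $\tfrac{1}{t+b}\int_{\{y=0\}}Wf^2\G\,dx$ with $W$ bounded by $\|V\|_{C^1}$, and those \emph{are} handled by Lemma~\ref{tr} with $\sigma=(t+b)^{-1/2}$ to give the claimed $\frac{C}{(t+b)^{(a+1)/2}}D_b+\frac{C}{(t+b)^{(3+a)/2}}H_b$.

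A minor related correction: the $-\tfrac{1}{t+b}D_b(t)$ term is assembled by writing $-\tfrac{1}{t+b}\int|\nabla f|^2\G y^a\,dX=-\tfrac{1}{t+b}D_b(t)+\tfrac{1}{t+b}\int_{\{y=0\}}Vf^2\G\,dx$, so the net coefficient on $\int_{\{y=0\}}Vf^2\G\,dx$ is $\tfrac{1-a}{2(t+b)}$, not zero; this residual is then absorbed into the trace-inequality errors.
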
 

\begin{proof}
Differentiating \eqref{Hb} and integrating by parts using \eqref{neu} and \eqref{screwy2}, we find
\begin{align*}
H_b'(t)&=\int 2 f f_t \G y^a dX + \int  f^2 \G_t y^a dX = \int 2 f f_t \G y^a dX + \int  f^2 \D(y^a\n \G) dX
\\
& = \int 2 f f_t \G y^a dX  -\int \langle\nabla f^2, \nabla \G\rangle y^a dX
\\
&= 2\int f\big\{f_t + \langle\n f,\frac{X}{2(t+b)}\rangle \big\} \G y^a dX = 2 \int f\ Zf\ \G y^a dX,
\end{align*}
where in the second to the last equality we have used \eqref{nG}, while in the last we have used \eqref{Z}. 
Keeping in mind that $f=U\phi$, by some elementary computations we obtain
\[
H'_b(t) = 2 \int \phi^2 U ZU  \G y^a dX + 2 \int \phi Z\phi  U^2  \G y^a dX.
\]
Since $Z\phi = \langle\n \phi,\frac{X}{2(t+b)}\rangle$ is supported in $\mathbb B_{7/2} \setminus \mathbb B_3$, by the latter equality and \eqref{Gring} we obtain for every $t+b>0$
\begin{align}\label{pert}
& \left|H_b'(t) -2 \int \phi^2 U ZU \G y^a dX\right| \leq \frac{C(n)}{t+b} \int_{\mathbb B_{7/2} \setminus \mathbb B_3} U^2 \G y^a dX
\\
& \le \frac{C(n,a)}{(t+b)^{\frac{n+a+3}{2}}} e^{-\frac{9}{4 (t+b)}} \int_{\mathbb B_4} U^2  y^a dX.
\notag
\end{align}
If we now choose $t_0<1$ sufficiently small, depending on the constant $C(n,a)$, it is clear that for $(t+b) \leq t_0$ we can ensure that
\begin{equation}\label{betteree}
\frac{C(n,a)}{(t+b)^{\frac{n+a+3}{2}}} e^{-\frac{9}{4 (t+b)}}  \leq e^{-\frac{17}{8 (t+b)}}, 
\end{equation}
thus establishing \eqref{Hprime}. To prove \eqref{hb8} observe that repeating with $\G(X,t+b)$ the computations in \eqref{h1} we find
\begin{align*}
H_b'(t)&=2\int f(f_t + \D(y^a \n f))\G y^a dX + 2D_b(t).
\end{align*}
Using \eqref{feq} and again the fact that  $\nabla \phi$ and $\D(y^a \nabla \phi)$ are supported in $\mathbb{B}_{7/2}  \setminus \mathbb B_3$, arguing as for \eqref{pert}, \eqref{betteree} we obtain \eqref{hb8}.

Next, we prove \eqref{db2}. Differentiating \eqref{Db} and using the Neumann condition $\py f((x,t),0)= V(x,t) f(x,t)$ in \eqref{feq} we obtain 
\begin{align*}
&D_b'(t)= 2 \int  \langle \n f,\n f_{t}\rangle\G y^a dX + \int |\nabla f|^2 \G_t y^a dX \\
&+ \int_{\{y=0\}} V_t f^2 \G dx +\int_{\{y=0\}}2Vff_t\G dx +\int_{\{y=0\}}Vf^2 \G_t dx.
\end{align*}
Integrating by parts and using \eqref{feq} again we find
\begin{align*}
& 2 \int  \langle \n f,\n f_{t}\rangle\G y^a dX  =  -2\int \operatorname{div}(y^a \G\n f) f_t dX -\int_{\{y=0\}}2Vff_t\G dx\\
	&= -2 \int f_t \D(y^a \nabla f)\G  dX -2\int f_t \langle \nabla f,\nabla \G\rangle y^a dX -\int_{\{y=0\}}2Vff_t\G dx\notag.
\end{align*}
Substituting in the above expression of $D_b'(t)$ we obtain
\begin{align}\label{0th}
&D_b'(t)= -2 \int f_t \D(y^a \nabla f)\G  dX -2\int f_t \langle \nabla f,\nabla \G\rangle y^a dX  + \int |\nabla f|^2 \G_t y^a dX \\
&+ \int_{\{y=0\}} V_t f^2 \G dx +\int_{\{y=0\}}Vf^2 \G_t dx.
\notag
\end{align}
Next, keeping in mind that $y^a \partial_t \G - \operatorname{div} (y^a \nabla \G) = 0$, an integration by parts combined with \eqref{neu}, \eqref{feq} and \eqref{nG} give
\begin{align}\label{nabla2}
& \int |\nabla f|^2 \G_t y^a dX = \int |\nabla f|^2 \D(y^a\n \G ) dX 
\\
& =-\int \langle\n(|\n f|^2),\n \G\rangle y^a dX = 2 \int f_i f_{ij}\frac{x_j}{2(t+b)} \G y^a dX,
\notag\\
&=-2\int D_i \left(y^a f_i \frac{x_j}{2(t+b)} \G\right) f_j dX +2 \its V f \langle  \n_x \G , \n_x f  \rangle dx
\notag\\
&=-2 \int \D(y^a \n f)\langle\frac{X}{2(t+b)},\n f \rangle\G dX - \frac{2}{2(t+b)} \int  |\n f|^2 \G y^a dX
\notag\\ 
&+ 2 \int \langle\frac{X}{2(t+b)},\n f\rangle^2 \G y^a  dX + 2 \its V f \langle \n_x \G, \n_x f \rangle dx.
\notag
\end{align}
In the ensuing computations we simplify the term $\its f^2 V \G_t dx$ in the right-hand side of \eqref{0th}. Using \eqref{screwy2} and the fact that the integrands in the integrals on the thin set $\{y=0\}$ are compactly supported in $B_{7/2}$, we find 
\begin{align}\label{2nd}
& \its f^2 V \G_t dx = -\frac{a+1}{2}\its f^2 V (t+b)^{-\frac{a+1}{2}-1} G_b dx + (t+b)^{-\frac{a+1}{2}}\its V f^2 \Delta_x G_b dx\\
&= -\frac{a+1}{2}\its f^2 V (t+b)^{-\frac{a+1}{2}-1} G_b dx - (t+b)^{-\frac{a+1}{2}}\its \langle \n_x (V f^2), \n_x G_b \rangle dx
\notag\\
&=-\frac{a+1}{2}\its f^2 V (t+b)^{-\frac{a+1}{2}-1} G_b dx - (t+b)^{-\frac{a+1}{2}}\its f^2 \langle \n_x V, \n_x G_b \rangle dx
\notag\\
& - 2(t+b)^{-\frac{a+1}{2}} \its  f V\langle \n_x f ,\n_x  G_b\rangle dx.\notag
\end{align}
Inserting \eqref{nabla2} and \eqref{2nd} in \eqref{0th}, and noting that in view of \eqref{screwy} we have
\[
2 \its V f \langle \n_x \G, \n_x f \rangle dx- 2(t+b)^{-\frac{a+1}{2}} \its  f V\langle \n_x f ,\n_x  G_b\rangle dx = 0,
\]
we obtain
\begin{align}\label{db}
& D_b'(t) = -2 \int f_t \D(y^a \nabla f)\G  dX -2\int f_t  \langle \nabla f, \nabla \G \rangle y^a dX
\\ 
& -2 \int \D(y^a \n f) \langle \frac{X}{2(t+b)},\n f\rangle \G dX - \frac{2}{2(t+b)} \int y^a |\n f|^2 \G dX
\notag\\
& + 2 \int y^a   \langle \n f , \frac{X}{2(t+b)} \rangle^2 \G dX
+ \its V_t f^2 \G dx 
\notag\\
& -\frac{a+1}{2}\its f^2 V (t+b)^{-\frac{a+1}{2}-1} G_b dx - \frac{1}{(t+b)^{\frac{a+1}{2}}}\its f^2 \langle \n_x V, \n_x G_b \rangle dx\notag.
\end{align}
Next we observe that in view of \eqref{nG} the following non-boundary terms in the right-hand side of \eqref{db} can be expressed as follows
\begin{align}\label{ter1}
&-2 \int f_t \D(y^a \nabla f)\G  dX -2\int f_t  \langle \nabla f, \nabla \G \rangle y^a dX -2 \int \D(y^a \n f) \langle \n f,\frac{X}{2(t+b)}\rangle \G dX\\
&  + 2 \int y^a   \langle\n f ,\frac{X}{2(t+b)}\rangle^2\G dX
 = 2 \int	\bigg\{- f_t \D(y^a \n f)  +  f_t \langle \n f, \frac{X}{2(t+b)}\rangle  y^a
\notag\\
& - \D(y^a \n f)\langle\n f ,\frac{X}{2(t+b)} \rangle +  \langle\n f , \frac{X}{2(t+b)}\rangle^2 y^a \bigg\} \G dX.
\notag
\end{align}
Keeping in mind that $f=U\phi$, and that $y^a U_t = - \D(y^a \n U)$, see \eqref{exprob}, we observe that  after some elementary calculations the expression within curly brackets in the integral in the right-hand side of \eqref{ter1} can be rewritten in the following way
\begin{align}\label{tyer1}
&- f_t \D(y^a \n f)  +  f_t \langle \n f, \frac{X}{2(t+b)}\rangle  y^a
 - \D(y^a \n f)\langle\n f ,\frac{X}{2(t+b)} \rangle +  \langle\n f , \frac{X}{2(t+b)}\rangle^2 y^a 
\\
& = \phi^2 \left\{U^2_t + 2 U_t \langle\n U,\frac{X}{2(t+b)}\rangle + \langle\n U,\frac{X}{2(t+b)}\rangle^2\right\} y^a
\notag\\
&  - 2 \phi U_t \langle \n U,\n\phi\rangle y^a - \phi U U_t \D(y^2 \n \phi) + 2 \phi U U_t \langle\n \phi,\frac{X}{2(t+b)}\rangle y^a
\notag\\
& - 2 \phi \langle \n U,\n\phi\rangle \langle\n U,\frac{X}{2(t+b)}\rangle - 2 U \langle \n U,\n\phi\rangle \langle\n \phi,\frac{X}{2(t+b)}\rangle y^a
\notag\\
& - \phi U \langle \n U,\frac{X}{2(t+b)}\rangle \D(y^a \n \phi) - U^2 \langle \n \phi,\frac{X}{2(t+b)}\rangle \D(y^a \n \phi)
\notag\\
& + U^2 \langle\n \phi,\frac{X}{2(t+b)}\rangle^2 y^a + 2 \phi U \langle\n U,\frac{X}{2(t+b)}\rangle\langle\n \phi,\frac{X}{2(t+b)}\rangle y^a.
\notag
\end{align}
Keeping \eqref{Z} in mind it is clear from \eqref{tyer1} that 
\begin{align}\label{tyer2}
&- f_t \D(y^a \n f)  +  f_t \langle \n f, \frac{X}{2(t+b)}\rangle  y^a
 - \D(y^a \n f)\langle\n f ,\frac{X}{2(t+b)} \rangle +  \langle\n f , \frac{X}{2(t+b)}\rangle^2 y^a 
\\
& = \phi^2 (ZU)^2 y^a + \mathscr R(U,\phi)(X,t),
\notag
\end{align}
where we have collected under the symbol $\mathscr R(U,\phi)(X,t)$ the remaining terms in \eqref{tyer1} which have $\nabla \phi$ and  $\D(y^a \n\phi)$ in their expression. Since the latter two functions are supported in $\mathbb B_{7/2} \setminus \mathbb B_{3}$, in view of \eqref{obs01}  
and of the regularity estimates in Lemma \ref{reg1}, it is not difficult to verify that  
\[
|\int_{\R^{n+1}_+} \mathscr R(U,\phi)(X,t) dX| \le \frac{C(n,a, V)}{(t+b)^{\frac{n+a+3}{2}}} e^{-\frac{9}{4 (t+b)}}   \int_{\mathbb Q_4} U^2 y^a dXdt.
\]
If we now choose $t_0<1$ sufficiently small, depending on the constant $C(n,a,V)$, it is clear that for $(t+b) \leq t_0$ we can ensure that \eqref{betteree} holds, with $C(n,a,V)$ instead of $C(n,a)$. We thus obtain
\begin{equation}\label{hb0}
|\int_{\R^{n+1}_+} \mathscr R(U,\phi)(X,t) dX| \le e^{-\frac{17}{8 (t+b)}} \int_{\mathbb Q_4} U^2 y^a dXdt.
\end{equation}
Combining \eqref{ter1}-\eqref{hb0} with \eqref{db}, and keeping \eqref{screwy} and \eqref{Db} in mind, we obtain
\begin{align}\label{doobydoo}
D_b'(t) & \ge 2\int \phi^2 (Z U)^2 \G y^a dX - e^{-\frac{17}{8 (t+b)}} \int_{\mathbb Q_4} U^2 y^a dXdt - \frac{1}{t+b} D_b(t)  
\\
& \ + \frac{1}{t+b} \int_{\{y=0\}}Vf^2 \G dx + \its V_t f^2 \G dx 
\notag\\
& \ -\frac{a+1}{2(t+b)}\its V f^2  \G dx  - \frac{1}{(t+b)^{\frac{a+1}{2}}}\its f^2 \langle \n_x V, \n_x G_b \rangle dx.
\notag\\
& = 2\int \phi^2 (Z U)^2 \G y^a dX - e^{-\frac{17}{8 (t+b)}} \int_{\mathbb Q_4} U^2 y^a dXdt - \frac{1}{t+b} D_b(t)  
\notag\\
& \ + \frac{1-a}{2(t+b)} \int_{\{y=0\}}Vf^2 \G dx + \its V_t f^2 \G dx 
\notag\\
&   - \frac{1}{(t+b)^{\frac{a+1}{2}}}\its f^2 \langle \n_x V, \n_x G_b \rangle dx.
\notag
\end{align}
Finally, we need to appropriately bound the three integrals on the thin set $\{y=0\}$ in the right-hand side of \eqref{doobydoo}. With this in mind, using \eqref{screwy2} we obviously have
\begin{align}\label{herewego}
& \left|\frac{1-a}{2(t+b)} \int_{\{y=0\}}Vf^2 \G dx + \its V_t f^2 \G dx 
- \frac{1}{(t+b)^{\frac{a+1}{2}}}\its f^2 \langle \n_x V, \n_x G_b \rangle dx\right| 
\\
& \le C(n,a,||V||_1)\ \frac{1}{t+b} \its f^2 \G dx.
\notag
\end{align} 
Applying the elliptic trace inequality in Lemma \ref{tr} to the function $f\sqrt \G = f(X,t)\sqrt{\G(X,t+b)}$ with a constant $\sigma = (t+b)^{-\frac{1}{2}}$, and using \eqref{Db}, we find
\begin{align}\label{tracef}
& \its f^2 \G dx \leq C_0 (t+b)^{-\frac{1+a}{2}}\int f^2 \G y^a dX + C_0 (t+b)^\frac{1-a}{2} \int |\n f |^2 \G y^a dX\\
&=C_0 \bigg((t+b)^{-\frac{1+a}{2}} H_b(t) + (t+b)^\frac{1-a}{2} D_b(t) - (t+b)^\frac{1-a}{2} \its V f^2 \G dx\bigg)\notag\\
&\leq C_0 \bigg((t+b)^{-\frac{1+a}{2}}H_b(t) + (t+b)^\frac{1-a}{2} D_b(t) + ||V||_{\infty}(t+b)^\frac{1-a}{2} \its f^2 \G dx\bigg)\notag.
\end{align}
If we now choose $t_0<1$ such that $C_0 ||V||_{\infty}t_0^{\frac{1-a}{2}} <1/2$, then the last integral in the right-hand side of \eqref{tracef} can be absorbed in the left-hand side and for $t+b \leq t_0$ we finally have 
\begin{align}\label{j1}
	\its f^2 \G dx \leq C_0 \bigg((t+b)^{-\frac{1+a}{2}}H_b(t) + (t+b)^{\frac{1-a}{2}} D_b(t)\bigg).
\end{align}
Inserting \eqref{j1} into \eqref{herewego}, and then \eqref{herewego} into \eqref{doobydoo}, we finally reach the desired conclusion \eqref{db2}.

\end{proof}

We now record as a corollary an elementary observation which will be needed in the proof of Theorem \ref{db1} below.

\begin{cor}
With the constants $C=C(n, a, ||V||_1)>0$ and $0<t_0 = t_0(n,a,||V||_{1})<1$ as in Lemma \ref{L:Db} we have for every $0<t+b\le t_0$
\begin{equation}\label{bnb}
N_b(t) \geq - C (t+b)^{\frac{1-a}{2}}.
\end{equation}
Furthermore, we have
\begin{equation}\label{bnb1}
|N_b(t)| \leq  2 C (t+b)^{\frac{1-a}{2}} + N_b(t).
\end{equation}
\end{cor}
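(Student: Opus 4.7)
The plan is to prove \eqref{bnb} by exhibiting a lower bound on $D_b(t)$ via an absorption argument powered by the trace estimate \eqref{j1}, and then deduce \eqref{bnb1} from \eqref{bnb} by a routine triangle inequality.

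\medskip

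\noindent\textbf{Step 1: Lower bound for $D_b(t)$.} From the definition \eqref{Db} and $|V| \le \|V\|_\infty$ we have, for any $0<t+b\le t_0$,
\[
D_b(t) = \int |\nabla f|^2 \G y^a dX + \int_{\{y=0\}} V f^2 \G dx \ge -\|V\|_\infty \int_{\{y=0\}} f^2 \G dx.
\]
Inserting the trace bound \eqref{j1} from Lemma \ref{L:Db}, which is valid precisely for $t+b\le t_0$, yields
\[
D_b(t) \ge -\|V\|_\infty C_0\left[(t+b)^{-\frac{1+a}{2}} H_b(t) + (t+b)^{\frac{1-a}{2}} D_b(t)\right].
\]

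\noindent\textbf{Step 2: Absorption.} Moving the $D_b(t)$ term to the left and dividing by the positive quantity $1 + C_0\|V\|_\infty (t+b)^{\frac{1-a}{2}} \ge 1$, we obtain
\[
D_b(t) \ge -\,\frac{C_0\|V\|_\infty}{1 + C_0\|V\|_\infty (t+b)^{\frac{1-a}{2}}}(t+b)^{-\frac{1+a}{2}} H_b(t) \ge -\,C_0\|V\|_\infty\,(t+b)^{-\frac{1+a}{2}} H_b(t).
\]
Multiplying through by $(t+b)$ we obtain $(t+b)D_b(t)\ge -C(t+b)^{\frac{1-a}{2}} H_b(t)$ with $C = C_0 \|V\|_\infty$, consistent (after possible enlargement) with the constant produced in Lemma \ref{L:Db}.

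\medskip

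\noindent\textbf{Step 3: Dividing by $H_b(t)$.} Under the standing assumption \eqref{ass}, Lemma \ref{L:below} guarantees $H_b(t) > 0$ in the relevant range of $t+b$, hence dividing the previous inequality by $H_b(t)$ and recalling the definition \eqref{Nb} gives \eqref{bnb}.

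\medskip

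\noindent\textbf{Step 4: From \eqref{bnb} to \eqref{bnb1}.} By the triangle inequality,
\[
|N_b(t)| = \big|\,N_b(t)+C(t+b)^{\frac{1-a}{2}} - C(t+b)^{\frac{1-a}{2}}\,\big| \le \big|\,N_b(t)+C(t+b)^{\frac{1-a}{2}}\,\big| + C(t+b)^{\frac{1-a}{2}}.
\]
By \eqref{bnb}, the expression inside the first absolute value is non-negative, hence equals itself, and we arrive at $|N_b(t)| \le N_b(t) + 2C(t+b)^{\frac{1-a}{2}}$, which is \eqref{bnb1}.

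\medskip

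The only substantive step is the absorption in Step 2, which requires $t+b$ to be small—and this is built into the hypothesis $0<t+b\le t_0$. Everything else is bookkeeping.
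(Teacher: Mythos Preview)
Your proof is correct and follows essentially the same route as the paper: drop the nonnegative gradient term in $D_b(t)$, apply the trace estimate \eqref{j1}, absorb the $D_b$ term from the right-hand side, multiply by $(t+b)$ and divide by $H_b(t)$; for \eqref{bnb1} the paper argues by a case split on the sign of $N_b(t)$ rather than your triangle inequality, but the two are equivalent. One small remark: the positivity of $H_b(t)$ on the full range $0<t+b\le t_0$ is justified directly from the standing assumption \eqref{ass} and continuity (as noted right after \eqref{Ib}), not from Lemma \ref{L:below}, whose hypothesis is the possibly more restrictive $t+b\le 1/(N\log(N\theta))$.
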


\begin{proof}
From \eqref{Db} we trivially have 
\[
D_b(t) \geq \int_{y=0} Vf^2 \G dx.
\]
Applying \eqref{j1} above we obtain
\begin{align*}
& D_b(t) \geq - C (t+b)^{\frac{1-a}{2}} D_b(t) - C (t+b)^{-\frac{1+a}{2}} H_b(t),
\end{align*}
which we can rewrite
\begin{align*}
&D_b(t) \geq - \frac{C (t+b)^{-\frac{1+a}{2}} H_b(t)}{ 1+ C(t+b)^{\frac{1-a}{2}}} \geq -C (t+b)^{-\frac{1+a}{2}} H_b(t).\end{align*}
The inequality \eqref{bnb} follows from \eqref{Nb} and \eqref{Ib}, after we divide by $H_b(t)$. The inequality \eqref{bnb1} is a direct consequence of \eqref{bnb}. Note that when $N_b(t) \geq 0$ the inequality is trivially true.
If instead $N_b(t) \leq 0$, then from \eqref{bnb} we have  
\begin{equation*}
2 C (t+b)^{\frac{1-a}{2}} + N_b(t) \geq C(t+b)^{\frac{1-a}{2}} \geq |N_b(t)|.
\end{equation*}

\end{proof}

We are now ready to establish the conditional doubling property which is the key  ingredient in our blowup analysis in the proof of Theorem \ref{main}. Throughout the remainder of this section the number $\theta$ has the meaning specified in \eqref{theta}. 

\begin{thrm}\label{db1}
Let $U$ be a solution of \eqref{exprob} in $\mathbb Q_4.$ There exists $N>2$, depending on $n$, $a$ and the $C^1$-norm of $V$, for which $N\log(N\theta) \ge 1$ and such that:
\begin{itemize}
\item[(i)] For $r \leq 1/2,$ we have 
$$\int_{\mathbb B_{2r}}U(X,0)^2 y^adX \leq (N \theta)^N\int_{\mathbb B_{r}}U(X,0)^2 y^adX.$$
Moreover for  $r \leq 1/\sqrt{N \operatorname{log}(N \theta)}$  the following two inequalities hold:
\item[(ii)] $$\int_{\mathbb Q_{2r}} U(X,t)^2y^a dXdt \leq \operatorname{exp}(N \operatorname{log}(N \theta) \operatorname{log}(N \operatorname{log}(N \theta)))r^2 \int_{\mathbb B_r}U^2(X,0)y^adX.$$
	 	\item[(iii)]$$\int_{\mathbb Q_{2r}} U(X,t)^2 y^adXdt \leq \operatorname{exp}(N \operatorname{log}(N \theta) \operatorname{log}(N \operatorname{log}(N \theta)))\int_{\mathbb Q_r}U(X,t)^2y^adXdt.$$ 
\end{itemize}
\end{thrm}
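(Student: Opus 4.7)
The plan is to deduce (i) via Lemma~\ref{escves4} applied to the trace $h(X) = f(X,0) = \phi(X) U(X,0)$, and then to obtain (ii) and (iii) by bootstrapping (i) against the time monotonicity of Lemma~\ref{mont}. Observe first that $e^{-|X|^2/4b}$ coincides with $\G(X, b)$ up to a multiplicative factor $c(n,a)\,b^{(n+a+1)/2}$ by \eqref{Gpole}, so the hypothesis of Lemma~\ref{escves4} with parameter $b$ and test function $h = f(\cdot, 0)$ is equivalent, after dividing by $H_b(0)$, to
\begin{equation*}
\frac{2b \int |\nabla f(\cdot, 0)|^2 \G(X, b) y^a dX}{H_b(0)} + \frac{n+1+a}{2} \leq N\log(N\theta).
\end{equation*}
Since $D_b(0) = \int |\nabla f|^2 \G y^a dX + \int_{\{y=0\}} V f^2 \G dx$, the trace inequality \eqref{j1} at $t = 0$ absorbs the boundary term and recasts the left-hand side as $C N_b(0) + C b^{(1-a)/2} + \tfrac{n+1+a}{2}$. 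Thus establishing (i) reduces to proving the frequency estimate $N_b(0) \leq CN\log(N\theta)$ for a suitable $b$ of order $1/N\log(N\theta)$.

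For the frequency bound I combine the Cauchy--Schwarz inequality
\begin{equation*}
\Bigl(\int \phi^2 U \cdot Z U\,\G y^a dX\Bigr)^2 \leq H_b(t) \int \phi^2 (ZU)^2 \G y^a dX
\end{equation*}
with \eqref{Hprime}--\eqref{db2}, and \eqref{bnb}--\eqref{bnb1} to absorb the sign of $N_b$ into lower-order terms. This produces an almost-monotonicity of the form
\begin{equation*}
\frac{d}{dt}\Bigl[e^{C(t+b)^{(1-a)/2}}(N_b(t) + C)\Bigr] \geq -\frac{C\,e^{-17/8(t+b)} \int_{\mathbb Q_4} U^2 y^a dX dt}{(t+b)\,H_b(t)}.
\end{equation*}
Lemma~\ref{L:below} bounds $1/H_b(t)$ by $C N\theta\, e^{1/(t+b)}/\int_{\mathbb Q_4} U^2 y^a dX dt$ on $0 < t + b \leq 1/N\log(N\theta)$, so the right-hand side is dominated by $C N\theta\, e^{-9/(8(t+b))}/(t+b)$, which integrates to a negligible constant once $N$ is large. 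This yields $N_b(0) \leq N_b(t_2) + C$. Choosing $b$ and $t_2 = 2b$ both of order $1/N\log(N\theta)$, integrating \eqref{hb8} on $[0, t_2]$, upper bounding $H_b(t_2) \leq C$ using $\int \G y^a dX \leq 1$ together with Lemma~\ref{reg1}, and lower bounding $H_b(0)$ by Lemma~\ref{L:below}, we arrive at
\begin{equation*}
2 N_b(0) \log 3 \leq \log\frac{H_b(t_2)}{H_b(0)} + C \leq \frac{1}{b} + \log(N\theta) + C \leq C N \log(N\theta),
\end{equation*}
whence $N_b(0) \leq CN\log(N\theta)$. Since $\phi \equiv 1$ on $\mathbb B_3 \supset \mathbb B_{2r}$ for $r \leq 1/2$, Lemma~\ref{escves4} now produces (i).

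For (ii) and (iii), I invoke Lemma~\ref{mont} in rescaled form. The substitution $U_r(X,t) = U(rX, r^2 t)$ transforms \eqref{exprob} into the same extension problem with potential $V_r(x,t) = r^{1-a} V(rx, r^2 t)$, whose $C^1$-norm is bounded by that of $V$ for $r \leq 1$. Applying Lemma~\ref{mont} to $U_r$ and unscaling,
\begin{equation*}
\int_{\mathbb B_{2r}} U(X, t)^2 y^a dX \leq N \int_{\mathbb B_{4r}} U(X, 0)^2 y^a dX, \qquad 0 \leq t \leq r^2/N\log(N\theta).
\end{equation*}
Partitioning $(0, 4r^2]$ into $\sim N\log(N\theta)$ such subintervals, iterating this inequality, and then using (i) a bounded number of times to pass from $\mathbb B_{4r}$ back to $\mathbb B_r$, one obtains (ii); the prefactor $r^2$ comes from the time integration and the product of losses gives the constant $\exp(N\log(N\theta)\log(N\log(N\theta)))$. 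Statement (iii) follows by combining (ii) with a matching reverse lower bound $\int_{\mathbb Q_r} U^2 y^a dX dt \geq c (r^2/N\log(N\theta)) \int_{\mathbb B_{r/2}} U(X,0)^2 y^a dX$, obtained by integrating the rescaled Lemma~\ref{mont} over a short time interval, together with one more application of (i) to pass from $\mathbb B_{r/2}$ back to $\mathbb B_r$.

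The main obstacle is the frequency bound itself. Unlike the translation-invariant local case of \cite{EFV}, the cutoff $\phi$ is embedded in every definition of $H_b, D_b, N_b$, so each application of the first variation formulas produces cutoff errors supported in $\mathbb B_{7/2} \setminus \mathbb B_3$. These decay only like $e^{-17/(8(t+b))}$, and must dominate the exponentially-small lower bound $e^{-1/(t+b)}$ supplied by Lemma~\ref{L:below}; the residual gap $e^{-9/(8(t+b))}$ is what makes the argument work, but this tension is precisely what forces the threshold $b \asymp 1/N\log(N\theta)$ and the $\log(N\theta)$-type dependence appearing in all three doubling constants.
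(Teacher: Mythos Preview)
Your argument for (i) is correct and essentially mirrors the paper's: both reduce to an almost-monotonicity of $N_b$, then anchor $N_b(0)$ by integrating \eqref{hb8} over a time window of length $\sim 1/N\log(N\theta)$ and comparing $H_b$ at the endpoints via Lemma~\ref{L:below} and Lemma~\ref{reg1}. The paper integrates over $[\beta/4,\beta/2]$ rather than $[0,2b]$, but this is cosmetic.

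Your proof of (ii), however, has a genuine gap: Lemma~\ref{mont} goes the wrong way. Its conclusion is
\[
N\int_{\mathbb B_2} U(X,t)^2 y^a\,dX \ \ge\ \int_{\mathbb B_1} U(X,0)^2 y^a\,dX,
\]
i.e.\ a \emph{lower} bound on the time-$t$ mass in terms of the time-$0$ mass, consistent with the fact that \eqref{exprob} is the backward equation and weighted $L^2$ mass is nondecreasing. After rescaling this becomes $N\int_{\mathbb B_{2r}} U(\cdot,t)^2 y^a\,dX \ge \int_{\mathbb B_r} U(\cdot,0)^2 y^a\,dX$, which cannot be reversed into the upper bound you assert. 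No amount of iteration or use of (i) repairs this; controlling the \emph{growth} of the backward solution is exactly the hard direction and is why the frequency is needed at every time, not just at $t=0$.

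The paper's route to (ii) is therefore genuinely different from what you propose. It uses the full frequency bound $N_b(t)\le N\log(N\theta)$ for all $t+b\le \tfrac{1}{12N\log(N\theta)}$ (your Claim~II analogue), inserts it into the first inequality in \eqref{logh} to get $\partial_t\log H_b(t)\le N\log(N\theta)/(t+b)$, and integrates to obtain $H_b(t)\le (1+t/b)^{N\log(N\theta)} H_b(0)$. The remaining step is a localisation: one must pass from the Gaussian-weighted quantity $H_b(0)=\int f(\cdot,0)^2 e^{-|X|^2/4b}y^a\,dX$ to $\int_{\mathbb B_r} U(\cdot,0)^2 y^a\,dX$. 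This is done by combining the Hardy inequality of Lemma~\ref{escves3} with the already-established inequality \eqref{dou}, choosing $b=r^2/(16N\log(N\theta))$, which forces the tail outside $\mathbb B_r$ to be absorbed. Your approach to (iii) via a rescaled Lemma~\ref{mont} lower bound on $\int_{\mathbb Q_r}$ is in fact fine once (ii) is in hand; the paper instead uses the $L^\infty$ estimate from \cite[Theorem~5.1]{BG}, but either works.
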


\begin{proof}
We begin by observing that by the divergence theorem and \eqref{feq} we can rewrite \eqref{Db} in the  following way
\begin{align*}
D_b(t)&= -\int \D(y^a \n f )f \G dX - \int \langle \n f, \n \G \rangle f y^a dX
\\
& =  \int f \big(f_t + \langle \n f,\frac{X}{2(t+b)}\rangle\big) \G  y^a dX -  \int f \left(2 y^a \langle\nabla U,\nabla \phi\rangle  + \D(y^a \nabla \phi) U\right) \G dX
\\
& = \int f Zf \G y^a dX - \int f \big(2 y^a \langle\nabla U,\nabla \phi\rangle  + \D(y^a \nabla \phi) U\big) \G dX,
\end{align*}
where in the last equality we have used \eqref{Z}. Since $f=U\phi$,  we find from this
\begin{align*}
D_b(t)&= \int \phi^2 U ZU \G y^a dX + \int \phi Z\phi U^2  \G y^a dX
\\
& - \int \phi U \left(2 y^a \langle\nabla U,\nabla \phi\rangle  + \D(y^a \nabla \phi) U\right) \G dX.
\end{align*}
By the support property of $\phi$, arguing as in \eqref{pert}, \eqref{betteree}, we infer the existence of $t_0<1$ such that for every $t+b\le t_0$
\begin{equation}\label{dasym}
\left|D_b(t)- \int \phi^2 U ZU  \G y^a dX\right|  \leq  e^{-\frac{17}{8 (t+b)}} \int_{\mathbb Q_4} U^2 y^a dXdt. 
\end{equation}
We next observe that
\begin{align}\label{clm1}
& \bigg| N_b(t)\frac{H_b'(t)}{H_b(t)} - \frac{2(t+b)}{H_b^2(t)}\left(\int U ZU \phi^2 \G y^a dX\right)^2 \bigg|
\\
& \leq  C \frac{e^{-\frac{17}{8 (t+b)}} \int_{\mathbb Q_4} U^2 y^a dXdt}{H_b(t)} |N_b(t)|    + C\bigg(\frac{e^{-\frac{17}{8 (t+b)}} \int_{\mathbb Q_4} U^2 y^a dXdt}{H_b(t)}\bigg)^2.\notag
\end{align}
To see this note that \eqref{dasym} gives
\begin{equation}\label{clm2}
D_b(t) = \int \phi^2 U ZU  \G y^a dX + \mathscr R_1,
\end{equation}
where 
\begin{equation*}
|\mathscr R_1| \leq e^{-\frac{17}{8(t+b)}} \int_{\mathbb Q_4} U^2 y^a dXdt.
\end{equation*}
Similarly, \eqref{Hprime} gives
\begin{equation}\label{clm5}
H_b'(t) = 2\int \phi^2 U ZU  \G y^a dX dt + \mathscr R_2,
\end{equation}
where
\begin{equation*}
|\mathscr R_2| \leq e^{-\frac{17}{8(t+b)}} \int_{\mathbb Q_4} U^2 y^a dXdt. \end{equation*}
Combining \eqref{clm2} and \eqref{clm5} we find
\begin{align}\label{clm8}
& N_b(t) \frac{H_b'(t)}{H_b(t)}= 2(t+b) \bigg( \frac{\int \phi^2 U ZU y^a dX}{H_b(t)} \bigg)^2
 + \frac{2 \mathscr R_1 (t+b) \int \phi^2 U ZU  \G y^a dX}{ H_b^2(t)}
 \\
 &  + \frac{\mathscr R_2 (t+b) \int \phi^2 U ZU \G y^a dX}{ H_b^2(t)} + \frac{\mathscr R_1\mathscr R_2}{H_b^2(t)}.
\notag
\end{align}
Again from \eqref{clm2} we obtain for $k = 1, 2$
\begin{align*}
&|\mathscr R_k  \int U ZU \phi^2 \G y^a dX|  \leq |D_b(t)|e^{-\frac{17}{8(t+b)}} \int_{\mathbb Q_4} U^2 y^a dXdt  + \bigg(e^{-\frac{17}{8(t+b)}} \int_{\mathbb Q_4} U^2 y^a dXdt\bigg)^2.
\end{align*}
Using this estimate in \eqref{clm8} we finally obtain \eqref{clm1}.

Our next objective is to prove the following almost monotonicity result for $N_b(t)$ in \eqref{Nb}.

\noindent \emph{\underline{Claim I}}: There exist  $C=C(n,a, ||V||_{1})>0$ and $N = N(n,a, ||V||_{1}) >2$, for which $N\log(N\theta) \ge 1$, and such that
for all $t>0$  with $t+b \leq \frac{1}{N\operatorname{log}(N\theta)}$ the following differential inequality is satisfied 
\begin{align}\label{monb}
\frac{d}{dt} \left(e^{C(t+b)^{\frac{1-a}{2}}}(N_b(t) +1)\right) \geq 0.
\end{align}
We leave it to the reader to verify that \eqref{monb} is true if we can show that for $t+b \leq \frac{1}{N\operatorname{log}(N\theta)}$ we have
\begin{equation}\label{logg}
N_b'(t) \ge -  \frac{C(1-a)}{2}(t+b)^{-\frac{1+a}{2}} (N_b(t)+1).
\end{equation}
To establish \eqref{logg} we first observe that \eqref{Nb} gives
\begin{align*}
& N_b'(t) =   \frac{t+b}{H_b(t)} D_b'(t) - N_b(t) \frac{H_b'(t)}{H_b(t)} + \frac{D_b(t)}{H_b(t)}.
\end{align*}
We now use the first variation estimate in  \eqref{db2}  to obtain from the latter equation
\begin{align}\label{log1}
& N_b'(t) \ge  \frac{t+b}{H_b(t)} \bigg\{2\int \phi^2 (Z U)^2 \G y^a dX -  e^{-\frac{17}{8 (t+b)}} \int_{\mathbb Q_4} U^2 y^a dXdt
\\
& - \left(\frac{1}{t+b} + \frac{C}{(t+b)^{\frac{a+1}{2}}}\right)D_b(t)- \frac{C}{(t+b)^{\frac{3+a}{2}}} H_b(t)\bigg\}\notag
\\
& - N_b(t) \frac{H_b'(t)}{H_b(t)} + \frac{D_b(t)}{H_b(t)}.\notag
\end{align}
Using  the estimate \eqref{clm1} in \eqref{log1} we find
\begin{align}\label{log2}
& N_b'(t) \geq \frac{2(t+b)}{H_b(t)^2}\left\{H_b(t) \int \phi^2 (Z U)^2 \G y^a dX - \left(\int \phi^2 U ZU \G y^a dX\right)^2\right\}
\\
& - C (t+b)^{-\frac{1+a}{2}} (N_b(t)+1)- (t+b) \frac{e^{-\frac{17}{8 (t+b)}} \int_{\mathbb Q_4} U^2 y^a dXdt}{H_b(t)}\notag\\
&    -    C \frac{e^{-\frac{17}{8 (t+b)}} \int_{\mathbb Q_4} U^2 y^a dXdt}{H_b(t)} |N_b(t)|  - C \left(\frac{e^{-\frac{17}{8 (t+b)}} \int_{\mathbb Q_4} U^2 y^a dXdt}{H_b(t)} \right)^2.\notag\end{align}
If we now use \eqref{bnb1} in \eqref{log2} (after relabeling by $C$ the constant $2C$) we obtain
\begin{align}\label{log20}
& N_b'(t) \geq \frac{2(t+b)}{H_b(t)^2}\left\{H_b(t) \int \phi^2 (Z U)^2 \G y^a dX - \left(\int \phi^2 U ZU \G y^a dX\right)^2\right\}
\\
& - C (t+b)^{-\frac{1+a}{2}} (N_b(t)+1) - (t+b) \frac{e^{-\frac{17}{8 (t+b)}} \int_{\mathbb Q_4} U^2 y^a dXdt}{H_b(t)}   \notag\\
&   -C \frac{e^{-\frac{17}{8 (t+b)}} \int_{\mathbb Q_4} U^2 y^a dXdt}{H_b(t)} N_b(t)     - C (t+b)^{\frac{1-a}{2}} \frac{e^{-\frac{17}{8 (t+b)}} \int_{\mathbb Q_4} U^2 y^a dXdt}{H_b(t)}\notag\\&  - C \left(\frac{e^{-\frac{17}{8 (t+b)}} \int_{\mathbb Q_4} U^2 y^a dXdt}{H_b(t)} \right)^2
\notag\\
& \ge - C (t+b)^{-\frac{1+a}{2}} (N_b(t)+1) - (t+b) \frac{e^{-\frac{17}{8 (t+b)}} \int_{\mathbb Q_4} U^2 y^a dXdt}{H_b(t)}   \notag\\
&   -C \frac{e^{-\frac{17}{8 (t+b)}} \int_{\mathbb Q_4} U^2 y^a dXdt}{H_b(t)} N_b(t)     - C (t+b)^{\frac{1-a}{2}} \frac{e^{-\frac{17}{8 (t+b)}} \int_{\mathbb Q_4} U^2 y^a dXdt}{H_b(t)}\notag\\&  - C \left(\frac{e^{-\frac{17}{8 (t+b)}} \int_{\mathbb Q_4} U^2 y^a dXdt}{H_b(t)} \right)^2,
\notag
\end{align}
where in the second inequality we have applied Cauchy-Schwarz inequality to infer that
\begin{equation*}
\frac{2(t+b)}{H_b(t)^2}\left\{H_b(t) \int \phi^2 (Z U)^2 \G y^a dX - \left(\int \phi^2 U ZU \G y^a dX\right)^2\right\} \geq 0.
\end{equation*}
Recalling the definition \eqref{theta} of $\theta$, and using \eqref{kju} in Lemma \ref{L:below} in the inequality \eqref{log20}, we obtain
\begin{align}\label{kju2}
&N_b'(t) \geq - C (t+b)^{-\frac{1+a}{2}} (N_b(t)+1) -  C (e^{-\frac{9}{8(t+b)}} \theta) N_b(t)\\
& - C(e^{-\frac{9}{8(t+b)}} \theta) - C (e^{-\frac{9}{8(t+b)}} \theta)^2
\notag\\
& \ge C (t+b)^{-\frac{1+a}{2}} (N_b(t)+1).
\notag
\end{align}
Note that in \eqref{kju2} we have used that $t+b\le 1$ (and therefore $(t+b)^{\frac{1-a}{2}} 
\le 1$), and also that since in particular $N>1$, if $t+b \leq \frac{1}{N\operatorname{log}(N \theta)}$ then we trivially have $e^{-9/8(t+b)}\theta \leq 1$. This finishes the proof of \eqref{logg}, and therefore of the \emph{\underline{Claim I}} \eqref{monb}.

Our next objective is proving that \eqref{monb} implies that on a sufficiently small time interval the frequency $N_b(t)$ is bounded from above. Precisely, we have:

\noindent \emph{\underline{Claim II}}: There is a (possibly larger) choice of the number $N = N(n,a, ||V||_{1}) >2$ in \emph{\underline{Claim I}} such that
for all $t>0$  with $t+b \leq \frac{1}{12 N\log(N\theta)}$ one has 
\begin{equation}\label{Nbound}
N_b(t) \leq N \operatorname{log}(N\theta).
\end{equation}
To establish \eqref{Nbound}, for ease of notation we henceforth let $\beta = \frac{1}{N \log(N\theta)}$. The reader should keep in mind that $0<\beta<1$. If in \eqref{hb8} we use  the lower bound \eqref{kju} and the above observed estimate $e^{-9/8(t+b)}\theta \leq 1$ we easily deduce that, with $N>2$ as in Lemma \ref{L:below}, for $0< t \le t+b \leq \beta$ the following two estimates hold 
\begin{equation}\label{logh}
\partial_t \log H_b(t) \leq N (1 + \frac{N_b(t)}{t+b}),\ \ \ \ \ 
\frac{N_b(t)}{t+b}  \leq N(1 + \partial_t \log H_b(t)).
\end{equation}
We next note that if $0<t_1 \leq t_2$ and $t_2 + b \leq \beta$, then \eqref{monb} implies (with the same universal constant $C>0$)
\begin{align}\label{ino1}
N_b(t_1) \leq e^{C}(N_b(t_2)+1).
\end{align}
Now, it is easy to verify that if $t+b \leq  \frac{\beta}{12}$ then $\frac{1}{\log{(3/2)}} \int_{\beta/4}^{\beta/2} \frac{dw}{w+b} \ge 1$, and that $\log\frac{\beta/2 + b}{\beta/4 + b}\le \log(7/3)$. From \eqref{ino1} and the second inequality in \eqref{logh} we thus have for $t+b \leq  \frac{\beta}{12}$ 
\begin{align*}
N_b(t) &\leq e^{C}(N_b(\beta/4)+1) \leq e^C\left\{\frac{1}{\log(3/2)}\int_{\beta/4}^{\beta/2} N_b(\beta/4)\frac{dw}{w+b} +1\right\}
\notag\\
& \leq e^C\left\{\frac{1}{\log(3/2)}\int_{\beta/4}^{\beta/2} \frac{e^C(N_b(w) +1)
}{w+b}dw + 1\right\}
\notag\\
& \le e^C\left\{\frac{e^C N}{\log(3/2)} \int_{\beta/4}^{\beta/2} (1+\p_w \log H_b(w))dw + \frac{e^C}{\log(3/2)} \log\frac{\beta/2 + b}{\beta/4 + b} + 1\right\}.
\end{align*}
This gives for $t+b \leq  \frac{\beta}{12}$
\begin{equation}\label{lk1}
N_b(t) \le C^\star\left(1 + \log \frac{H_b(\beta/2)}{H_b(\beta/4)}\right),
\end{equation}
for an appropriate constant $C^\star>0$. 
Now, by \eqref{kju} in Lemma \ref{L:below} we obtain
\[
H_b(\beta/4) \geq  N^{-1} e^{-\frac{1}{(\beta/4)+b}} \int_{\mathbb B_1} U(X,0)^2 y^a dX \ge N^{-1} e^{-\frac{1}{(\beta/4)}} \int_{\mathbb B_1} U(X,0)^2 y^a dX.
\]
Since $e^{-\frac{1}{(\beta/4)}} = (N\theta)^{-4N}$, we have found
\begin{equation}\label{lk4}
H_b(\beta/4) \ge N^{-1} (N\theta)^{-4N} \int_{\mathbb B_1} U(X,0)^2 y^a dX.
\end{equation}
On the other hand, we have from  \eqref{Hb} and the support property of $\phi$
\begin{align*}
& H_b(\beta/2)  \le  \int_{\mathbb B_{7/2}} U(X,\beta/2)^2 \G(X,\beta/2+b) y^a dX
\\
& \le ||U(\cdot,\beta/2)||_{L^\infty(\mathbb B_{7/2})}^2 \int_{\R^{n+1}_+} \G(X,\beta/2+b) y^a dX.
\end{align*}
By \eqref{Ga1} and the $L^{\infty}$ estimate in Lemma \ref{reg1} we infer for some $\overline C = \overline C(n,a)>0$
\begin{equation}\label{lk2} 
H_b(\beta/2) \leq \overline C  \int_{\mathbb Q_4} U^2 y^a dXdt.
\end{equation}
Using \eqref{lk4} and \eqref{lk2} in \eqref{lk1}, after possibly adjusting the choice of $N$, we finally obtain \eqref{Nbound} in \emph{\underline{Claim II}}.

With \eqref{Nbound} in hands, if we set $t=0$ in it (keeping \eqref{Nb} in mind) we deduce the following inequality
\begin{align}\label{nk01}
&b \int |\n f(X,0)|^2\G (X,b) y^a dX + b \its V(x,0) f((x,0),0)^2\G ((x,0),b)dx\\ & \leq N\operatorname{log}(N \theta) \int  f(X,0)^2\G (X,b) y^a dX.\notag
\end{align}
If we now use the trace inequality in the first line of \eqref{tracef} with $t=0$ we obtain 
\begin{align*}
& \bigg|\its V(x,0) f((x,0),0)^2\G ((x,0),b)dx\bigg| \leq  ||V||_\infty \its f((x,0),0)^2\G ((x,0),b)dx
\\
& \le C_0 ||V||_\infty b^{-\frac{1+a}{2}}\int f(X,0)^2 \G(X,b) y^a dX + C_0 ||V||_\infty b^{\frac{1-a}{2}} \its |\n f(X,0)|^2 \G(X,b) y^a dX. 
\end{align*}
Using this estimate in \eqref{nk01} we infer
\begin{align*}
& (b - C_0 ||V||_\infty b^{\frac{3-a}2}) \int |\n f(X,0)|^2\G (X,b) y^a dX - C_0 ||V||_\infty b^{\frac{1-a}2} \int f(X,0)^2 \G(X,b) y^a dX
\\
& \le N\operatorname{log}(N \theta) \int  f(X,0)^2\G (X,b) y^a dX.
\end{align*}
 Since $0 <\frac{1-a}{2} <1$, we now choose $b$ small enough to ensure that 
 \begin{equation}\label{nkoo}
 b^{\frac{1-a}{2}} < \frac{1}{2C_0 ||V||_\infty}.
 \end{equation}
Observing that such choice guarantees the following inequality $b-C_0 ||V||_\infty b^{\frac{3-a}{2}} >b/2$,
we obtain
\begin{align*}
& \frac{b}{2} \int |\n f(X,0)|^2\G (X,b) y^a dX - \frac{1}{2} \int f(X,0)^2 \G(X,b) y^a dX
\\
& \leq N\operatorname{log}(N \theta) \int  f(X,0)^2\G (X,b) y^a dX.
\end{align*}
Since by \eqref{Gpole} we have $\G(X,b) = \frac{(4\pi)^{-\frac n2} 2^{-a}}{\Gamma((1+a)/2)} b^{-\frac{n+a+1}{2}} e^{-\frac{|X|^2}{4b}}$, 
the latter estimate can be rewritten as follows
\begin{align*}
& \frac{b}{2} \int |\n f(X,0)|^2 e^{-\frac{|X|^2}{4b}} y^a dX - \frac{1}{2} \int f(X,0)^2 e^{-\frac{|X|^2}{4b}} y^a dX
\\
& \leq N\operatorname{log}(N \theta) \int  f(X,0)^2 e^{-\frac{|X|^2}{4b}} y^a dX.
\notag
\end{align*}
If we now add $\frac{n+5+a}{8} \int f(X,0)^2 e^{-|X|^2/{4b}} y^a dX$ to both sides of the latter inequality, and then multiply the resulting one by $4$, after possibly further adjusting the choice of $N$ we obtain 
\begin{align}\label{dou}
& 2b \int |\n f(X,0)|^2e^{-|X|^2/{4b}} y^a dX + \frac{n+1+a}{2}\int f(X,0)^2 e^{-|X|^2/{4b}} y^a dX  
\\
& \leq N\operatorname{log}(N \theta) \int  f(X,0)^2 e^{-|X|^2/{4b}} y^a dX.
\notag
\end{align} 
Having established \eqref{dou}, we can now appeal to Lemma \ref{escves4} to deduce that for all $r\leq 1/2$ 
\begin{align*}
\int_{\mathbb B_{2r}}f(X,0)^2dX \leq (N \theta)^N\int_{\mathbb B_{r}} f(X,0)^2dX.
\end{align*}
Since $f=\phi U$ and $\phi \equiv 1$ in $\mathbb B_3$, this finally proves (i).

We next prove (ii). With this in mind, if $N>2$ is fixed as in the proof of (i), we take $0<r \leq 1/\sqrt{N \log(N \theta)}$ and now assume that the parameter $b$ satisfy
\begin{equation}\label{b}
0< b \leq \frac{r^2}{16N\operatorname{log}(N\theta)}.
\end{equation}
If we insert the bound \eqref{Nbound} for $N_b(t)$ in the first inequality in \eqref{logh}, we find for $0< t < t+b \leq \frac{1}{N \log(N\theta)}$
\[
\partial_t \operatorname{log}H_b(t) \leq N \operatorname{log}(N \theta)/(t+b).
\]
Integrating on $[0,t]$, and exponentiating the resulting inequality, we obtain for $t < t+b \leq \frac{1}{N \log(N\theta)}$ (after keeping again \eqref{Gpole} in mind)
\begin{align}\label{1}
\int f(X,t)^2 e^{-|X|^2/{4(t+b)}} y^a dX \leq \left(1+\frac{t}{b}\right)^{N\operatorname{log}(N \theta)}\int f(X,0)^2 e^{-|X|^2/{4b}} y^a dX.
\end{align}
We now estimate the integral in the right-hand side as follows
\begin{align}\label{in1}
& \int f(X,0)^2 e^{-|X|^2/{4b}} y^a dX =\int_{\mathbb B_r} f(X,0)^2 e^{-|X|^2/{4b}} y^a dX+\int_{\R^{n+1}_+\setminus \mathbb B_r} f(X,0)^2 e^{-|X|^2/{4b}} y^a dX
\\
& \leq \int_{\mathbb B_r} f(X,0)^2 e^{-|X|^2/{4b}} y^a dX+\frac{16b}{r^2}\int_{\R^{n+1}_+\setminus \mathbb B_r} \frac{|X|^2}{16b} f(X,0)^2 e^{-|X|^2/{4b}} y^a dX
\notag\\
& \leq \int_{\mathbb B_r} f(X,0)^2 e^{-|X|^2/{4b}} y^a dX + \frac{1}{N \operatorname{log}(N\theta)}\int_{\R^{n+1}_+\setminus \mathbb B_r} \frac{|X|^2}{16b} f(X,0)^2 e^{-|X|^2/{4b}} y^a dX,
\notag
\end{align}
where we have used \eqref{b}. The Hardy type inequality in Lemma \ref{escves3} and \eqref{dou} now give
\begin{align*}
&\int \frac{|X|^2}{8b}f(X,0)^2 e^{-|X|^2/4b}y^adX\\ &\leq  2b \int |\n f(X,0)|^2 e^{-|X|^2/4b}y^a dX + \frac{n+a+1}{2} \int f(X,0)^2  e^{-|X|^2/4b} y^a dX\notag\\
& \leq N\operatorname{log}(N\theta)\int f(X,0)^2  e^{-|X|^2/4b} y^a dX\notag\\
& \leq N\operatorname{log}(N\theta)\left(\int_{\mathbb B_r} f(X,0)^2  e^{-|X|^2/4b} y^a dX +\int_{\R^{n+1}\setminus \mathbb B_r} f(X,0)^2 e^{-|X|^2/{4b}} y^a dX\right)\notag\\
& \leq N\operatorname{log}(N\theta)\int_{\mathbb B_r} f(X,0)^2  e^{-|X|^2/4b} y^a dX + N\operatorname{log}(N\theta)\frac{8b}{r^2}\int_{\R^{n+1}\setminus \mathbb B_r}\frac{|X|^2}{8b} f(X,0)^2 e^{-|X|^2/{4b}} y^a dX.\notag
\end{align*}
By \eqref{b} again, we see that the last integral in the right-hand side can be estimated as follows
\begin{align*}
& N\operatorname{log}(N\theta)\frac{8b}{r^2}\int_{\R^{n+1}\setminus \mathbb B_r}\frac{|X|^2}{8b} f(X,0)^2 e^{-|X|^2/{4b}} y^a dX \leq  \frac 12 \int \frac{|X|^2}{8b}f(X,0)^2 e^{-|X|^2/4b}y^adX.
\end{align*}
Therefore, it can be absorbed in the left-hand side obtaining the following estimate 
\begin{align*}
\int \frac{|X|^2}{16b}f(X,0)^2 e^{-|X|^2/4b}y^adX  \leq N\operatorname{log}(N\theta)\int_{\mathbb B_r} f(X,0)^2  e^{-|X|^2/4b} y^a dX.
\end{align*}
If we use this inequality to estimate the integral on $\R^{n+1}_+\setminus \mathbb B_r$ in the right-hand side of \eqref{in1} we reach the following crucial conclusion
\begin{align*}
\int f(X,0)^2 e^{-|X|^2/{4b}} y^a dX \leq 2\int_{\mathbb B_r} f(X,0)^2  e^{-|X|^2/4b} y^a dX.
\end{align*}
Using this estimate in \eqref{1} we obtain for $0<t \le t+b \leq \frac{1}{N \log(N\theta)}$
\begin{align}\label{in4}
\int f(X,t)^2 e^{-|X|^2/{4(t+b)}} y^a dX \leq 2\left(1+\frac{t}{b}\right)^{N\operatorname{log}(N \theta)}\int_{\mathbb B_r} f(X,0)^2  e^{-|X|^2/4b} y^a dX.
\end{align}
We now choose $b = \frac{r^2}{16N\log(N\theta)}$, so that \eqref{b} is still satisfied, and assuming that $4 r^2 \leq \frac{1}{16N\log(N\theta)}$ we let $0<t \leq 4r^2$. Keeping in mind that   $f=U$ in $\mathbb B_{3}$, and that by our choice of $b$ on the set $\mathbb B_{2r}$ the following bound from below holds $e^{-|X|^2/{4(t+b)}} \ge (N\theta)^{-16N}$, we deduce  from \eqref{in4} the following inequality
\begin{align*}
\int_{\mathbb B_{2r}} U^2(X,t)y^adX \leq 2 (N\theta)^{16N} \left(1+\frac{t}{b}\right)^{N\log(N \theta)} \int_{\mathbb B_r}U^2(X,0)y^adX.
\end{align*}
Integrating this inequality with respect to $t\in [0,4r^2]$ and using our choice $b= \frac{r^2}{16N\operatorname{log}(N\theta)}$ finally gives (ii). The proof of (iii) follows from (ii) using (i) and  the $L^{\infty}$ bound of $U$ in $\mathbb B_{r/2}\times \{0\}$ in terms of $\int_{\mathbb Q_r} U^2 y^a dXdt$ in \cite[Theorem 5.1]{BG}. 
\end{proof}
With Theorem \ref{db1} in hand we now proceed with the proof of our main result.
\begin{proof}[Proof of Theorem \ref{main}]
Our first crucial step is to show that if $u \in \operatorname{Dom}(H^s)$ solves \eqref{e0} in $B_1 \times (-1, 0]$ and vanishes  to infinite order at $(0,0)$, then for the solution $U$ to the extension problem \eqref{exprob} we must have 
\begin{equation}\label{crucial}
U(X,0) \equiv 0, \ \ \ \ \ \ \ \text{for every}\ X\in \mathbb B_1.
\end{equation}
We argue by contradiction and assume that \eqref{crucial} is not true. Consequently, \eqref{ass} does hold and therefore we can use the results in Section \ref{s:main}. In particular, from \eqref{ass} and (i) in Theorem \ref{db1} it follows that $\int_{\mathbb B_r} U(X, 0)^2 y^a dX > 0$ for all $0<r \leq \frac{1}{2}$. From this fact and the continuity of $U$ up to the thin set $\{y=0\}$ we deduce that
\begin{equation}\label{nzero}
\int_{\mathbb Q_r} U^2 y^a dX dt>0, \end{equation} for all $0< r \leq 1/2$. Moreover, the  inequality (iii) in Theorem \ref{db1} holds, i.e. there exist $r_0$ and $C$ depending on $\theta$ in \eqref{theta} such that for all $r\leq r_0$ one has
\begin{equation*}
\int_{\mathbb Q_r} U^2 y^a dX dt \leq C \int_{\mathbb Q_{r/2}} U^2 y^a dXdt.
\end{equation*}
From this doubling estimate we can derive in a standard fashion the following inequality for all $r \leq \frac{r_0}{2}$
\begin{equation*}
\int_{\mathbb Q_r} U^2 y^a dX dt  \geq \frac{r^{L}}{C}   \int_{\mathbb Q_{r_0}} U^2 y^a dX dt,
\end{equation*}
where $L= \operatorname{log}_2 C$. Letting $c_0=\frac{1}{C}\int_{\mathbb Q_{r_0}} U^2 y^a dX dt$, and noting that $c_0>0$ in view of \eqref{nzero}, we can rewrite the latter inequality as
\begin{equation}\label{fvan1}
\int_{\mathbb Q_r} U^2 y^a dX dt  \geq c_0 r^{L}.
\end{equation}
Let now $r_j\searrow 0$ be a sequence such that $r_j \leq r_0$ for every $j\in \mathbb N$, and define
\[
U_j(X,t) = \frac{U(r_jX, r_j^2 t)}{\bigg(\frac{1}{r_j^{n+3+a}}\int_{\mathbb Q_{r_j}} U^2 y^a dX dt\bigg)^{1/2}}. 
\]
Note that on account of  \eqref{nzero} the functions $U_j$'s are well defined. Furthermore, a change of variable gives for every $j\in \mathbb N$
\begin{equation}\label{bound}
\int_{\mathbb Q_1} U_j^2 y^a dX dt=1.\end{equation}
This fact and the above doubling property imply for all $j$
\begin{equation}\label{nondeg}
\int_{\mathbb Q_{1/2}} U_j^2 y^a dX dt \geq C^{-1}.
\end{equation}
Moreover $U_j$ solves the following problem in $\mathbb Q_1$
\begin{equation}\label{expb1}
\begin{cases}
\D( y^a \nabla U_j) + y^a \partial_t U_j=0,
\\
\py U_j ((x,0),t) = r_j^{1-a} V(r_jx, r_j^2 t) U_j ((x,0),t).
\end{cases}
\end{equation}
From \eqref{bound} and the regularity estimates in Lemma \ref{reg1} we infer that, possibly passing to a subsequence which we continue to indicate with $U_j$, we have $U_j \to U_0$ in $H^{\alpha}(\mathbb Q_{3/4})$ up to $\{y=0\}$ and also $\py U_j ((x,0),t) \to \py U_0 ((x,0),t)$ uniformly in $\mathbb Q_{3/4} \cap \{y=0\}$. Consequently, from \eqref{expb1} we infer that the blowup limit $U_0$ solves in $\mathbb Q_{3/4}$ 
\begin{equation}\label{expb2}
\begin{cases}
\D( y^a \nabla U_0) + y^a \partial_t U_0=0,
\\
\py U_0 ((x,0), t) = 0.
\end{cases}
\end{equation}
We now observe that a change of variable and \eqref{fvan1} give
\[
\int_{Q_1}  U_j((x,0),t)^2 dxdt \le c_0^{-1} r_j^{a+1-L} \int_{Q_{r_j}} U((x,0),t)^2 dx dt.
\]
Since by \eqref{exprob} we have $U((x,0),t) = u(x,t)$, by the assumption that $u$ vanishes to infinite order in the sense of \eqref{van1} we infer that 
\[
\int_{Q_1}  U_j((x,0),t)^2 dxdt  \to 0.
\]
Again, since $U_j \to U_0$ uniformly in $\mathbb Q_{1/2}$ up to $\{y=0\}$, we deduce that it must be $U_0 \equiv 0$ in $\mathbb Q_{1/2} \cap \{y=0\}$. Moreover, since $U_0$ solves the problem \eqref{expb2}, we can now apply the weak unique continuation result in Proposition \ref{wucp}  to infer that  $U_0 \equiv 0$ in $\mathbb Q_{1/2}$. On the other hand, from the uniform convergence  of $U_j$'s in $\mathbb Q_{1/2}$ and  the non-degeneracy estimate \eqref{nondeg} we also have
\begin{equation}\label{nondeg1}
\int_{\mathbb Q_{1/2}} U_0^2 y^a dX dt \geq C^{-1},
\end{equation}
and thus $U_0\not\equiv 0$ in $\mathbb Q_{1/2}$. This contradiction leads to the conclusion that \eqref{crucial} must be true.  We now note that, away from the thin set $\{y=0\}$, $U$ solves a uniformly parabolic PDE with smooth coefficients and vanishes identically in the half-ball $\mathbb B_1$.  We can thus appeal to \cite[Theorem 1]{AV} to assert that $U$ vanishes to infinite order both in space and time in the sense of \eqref{van1} at every $(X,0)$ for $X \in \mathbb B_1$. At this point, we can  use the strong unique continuation  result  in   \cite[Theorem 1]{EF}  to finally conclude that $U(X,0) \equiv 0$ for $X\in \R^{n+1}_+$. Letting $y=0$, this implies $u(x,0) = U((x,0),0) \equiv 0$ for $x\in \R^n$. This completes the proof of the theorem. 
\end{proof}

\end{document}